\documentclass{amsart}
\usepackage{amsmath, amsfonts, amssymb, amsthm, enumitem, xcolor, hyperref} %
\usepackage{theoremref}
\usepackage{comment}
\usepackage{graphicx}
\newtheorem{theorem}{Theorem}

\newtheorem{lemma}[theorem]{Lemma}

\newtheorem{proposition}[theorem]{Proposition}
\newtheorem{remark}[theorem]{Remark}

\newcommand{\N}{\mathbb{N}}
\newcommand{\R}{\mathbb{R}}
\newcommand{\E}{\mathbf{E}}
\newcommand{\f}{\frac}
\renewcommand{\P}{\mathbf{P}}

\newcommand{\config}{\sigma}

\NewDocumentCommand{\weakconfig}{o}{%
  \config_{\Weak %
  \IfValueTF{#1}{, #1}{}%
  }
}
\NewDocumentCommand{\weakodom}{o}{%
  \odom_{\Weak %
  \IfValueTF{#1}{, #1}{}%
  }
}

\NewDocumentCommand{\Returns}{o}{%
  \mathsf{Returns}%
  \IfValueTF{#1}{(#1)}{}%
}

\newcommand{\Weak}{\mathsf{W}}

\newcommand{\StabSymb}{\mathsf{S}}
\newcommand{\OdomSymb}{\mathsf{O}}
\NewDocumentCommand{\SpacedOp}{ O{} m m }{%
  {#2#3}%
  \IfValueTF{#1}{^{#1}\!}{}%
}
\NewDocumentCommand{\Stab}{o}{\SpacedOp[#1]{}{ \StabSymb }}
\NewDocumentCommand{\WeakStab}{o}{\SpacedOp[#1]{\Weak}{ \StabSymb }}
\NewDocumentCommand{\Odom}{o}{\SpacedOp[#1]{}{ \OdomSymb }}
\NewDocumentCommand{\WeakOdom}{o}{\SpacedOp[#1]{\Weak}{ \OdomSymb }}
\newcommand{\odom}{m}

\newcommand{\Jumps}{\mathfrak{J}}

\DeclareMathOperator{\Ber}{Bernoulli}
\DeclareMathOperator{\var}{Var}
\DeclareMathOperator{\cov}{Cov}
\DeclareMathOperator{\Pois}{Pois}
\DeclareMathOperator{\Geo}{Geo}

\newcommand{\Bin}{\mathrm{Binomial}}
\newcommand{\Exp}{\mathrm{Exp}}

\newcommand{\Tail}{\bar{F}}

\newcommand{\ExcTime}{L}
\renewcommand{\epsilon}{\varepsilon}

\title[Scaling limit and density conjecture for activated random walk]{Scaling limit and density conjecture for activated random walk on the complete graph}

\author{Matthew Junge} \address{Matthew Junge, Department of Mathematics, Baruch College, City University of New York}
	\email{\texttt{matthew.junge@baruch.cuny.edu}}
\author{Harley Kaufman}\address{Harley Kaufman, Department of Mathematics, Baruch College, City University of New York}
	\email{\texttt{Harkauf100@gmail.com} } 
\author{Josh Meisel}\address{Josh Meisel, Department of Mathematics, Graduate Center, City University of New York}
	\email{\texttt{jmeisel@gradcenter.cuny.edu}}

\begin{document}

\begin{abstract}
    We study driven-dissipative activated random walk with sleep probability $p$ on an $n$-vertex complete graph with a sink that traps jumping particles with probability $q_n$. We show that the number of sleeping particles $S_n$ left by the stationary distribution has a Gumbel scaling limit for $\exp(-n^{1/3}) \ll q_n \ll n^{-1/2}$. The particular scaling implies that $S_n$ is hyperuniform and thus the stationary configuration law has negative correlations and is not a product measure. We also prove that $S_n/n$ converges to $p$ if and only if $q_n = e^{-o(n)}$, and that, when $q_n=0$, the number of jumps to stabilization undergoes a phase transition at density $p$.
\end{abstract}

\maketitle

\section{Introduction}

Activated random walk (ARW) is a promising sandpile model of Bak, Tang, and Wiesenfeld's theory of self-organized criticality which posits that gradual accumulation of tension that is released in bursts causes the critical-like behavior observed in diverse natural systems such as snow slopes, tectonic plates, and star surfaces \cite{bak1987soc, dickman2010activated}. 
In ARW, active particles perform independent continuous-time simple random walks on a graph at {jump rate} $1$, and fall asleep at {sleep rate} $\lambda > 0$. Sleeping particles do not move, but become active when visited by another particle.  The probability that a particle tries to fall asleep on a given step is $p := \lambda/(1+\lambda).$ 

Jara\'i, M\"onch, and Taggi initiated the study of \emph{driven-dissipative} ARW on the complete graph $K_{n+1}$ on $n+1$ vertices with vertex $n+1$ designated as the \emph{sink} \cite{jarai2024critical}. This is a Markov chain with a state space of \emph{configurations} in $\{0,\mathfrak s\}^{[n]}$ with $[n] := \{1,2,\hdots, n\}$. Here state $0$ represents an empty site and state $\mathfrak s$ represents a site with one sleeping particle. At each time step $t > 0$, an active particle is added to an independent uniformly chosen vertex $v_t \in [n]$, then the system is stabilized to an all-sleeping configuration. Any particles that jump to the sink during the stabilization are immediately removed. 
A consequence of Levine and Liang's work \cite{levine2021exact} is that this Markov chain has a unique stationary distribution that can be exactly sampled by stabilizing the configuration $1_{[n]}$ that contains one active particle per site. Let $S_n'$ be the number of sleeping particles in a configuration sampled in this manner.

The main theorem of \cite{jarai2024critical}  is that for any $\epsilon >0$ as $n \to \infty$  
\begin{align}
\P(|S_n' - (p n + \alpha_n)|> \epsilon \alpha_n) \to 0, \label{eq:1.2}
\end{align} 
with $\alpha_n := \sqrt{ p (1-p) n \log n}$.
This implies that $S_n' / n \to p$ in probability and pins down the exact first-order shift $\alpha_n + o(\alpha_n)$ above $pn$.  The appearance of $p$ as the mean-field critical density was predicted in \cite{dickman2010activated} and has been observed in other settings such as totally directed walks \cite{HS04} and the high-dimensional limit on $\mathbb Z^d$ \cite{junge2025mean}. 

Although the critical value is arguably the most central theme in ARW research \cite{rollaSurvey}, it is known exactly only in settings where it equals $p$. For mean-field and directed ARW the stationary distribution is a product measure, which lacks the desired complexity of a model of self-organized criticality. Our results demonstrate that the complete graph is a richer setting.

\subsection{Results}
 We study ARW on the graph $\bar K_{n+1}$ obtained from $K_{n+1}$ by adding a self-loop to each vertex in $[n]$. As before, $n+1$ is the sink. Jumping particles move to the sink with probability $q_n \in (0,1]$ and otherwise jump to a uniformly chosen site in $[n]$, including the particle's current location.  
We denote the number of sleeping particles left in a stationary configuration by $S_n = S_{n , q_n, \lambda}$. 

See Appendix~\ref{sec:notation} for explanations of our asymptotic and random variable notation. 
Our first result shows that the critical density $p$ appears robustly across many sink strengths.

\begin{theorem}\thlabel{thm:iff}
 $S_n/n \overset{ \P} \to p$ if and only if $q_n = e^{-o(n)}$.
\end{theorem}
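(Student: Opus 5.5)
We will work throughout with the exact sampling from the Levine--Liang result: $S_n$ is the number of sleeping particles obtained by stabilizing $1_{[n]}$ on $\bar K_{n+1}$. Because every jump on $\bar K_{n+1}$ goes to the sink with probability $q_n$ and otherwise to a uniform site, the site labels play no role. The whole process therefore reduces to a one-dimensional chain in which we track the number of active particles $A$ and the number of sleeping particles $Z$.

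We will use a particular stabilization order (abelian property). Repeatedly pick an active particle. With probability $p$ it attempts to sleep; the attempt succeeds only if the particle is alone at its site, and on the complete graph the particle can be arranged to be alone. Otherwise it jumps: to the sink with probability $(1-p)q_n$, or to a uniform site with probability $(1-p)(1-q_n)$. A jump onto a sleeping site wakes that particle, which happens with probability $Z/n$.

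The resulting chain has the following features:
\begin{itemize}
\item The drift of $Z$ per step is roughly $p$ upward and $(1-p)Z/n$ downward. It therefore equilibrates near $Z \approx pn$, with fluctuations of order $\sqrt n$.
\item Active mass is lost only through the sink, at rate $(1-p)q_n$ per step.
\end{itemize}
Stabilization ends the first time $A=0$.

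\textbf{Upper bound and ``if'' direction.} Coupling with the $q_n=0$ case, $S_n \le pn + O_{\P}(\sqrt{n\log n})$. This comes from the fact that the chain must hit $A=0$, which requires $Z$ to climb far above its equilibrium value.

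For the lower bound we first observe that the sink can only end stabilization early by removing mass. At density near $p$, consider the number of steps spent while $Z \approx pn-\epsilon n$ and $A \ge \epsilon n$. With the self-regulating drift, $Z$ moves to $pn \pm o(n)$ within $O(n)$ steps. After that, the chain sits in quasi-equilibrium, and exiting it (reaching $A=0$ from the level $Z \approx pn$) needs a large deviation of $Z$. Losing $\epsilon n$ particles to the sink needs about $\epsilon n/q_n$ steps.

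The key estimate is that the time to stabilize from mass $\approx (p+\epsilon) n$ is at most $e^{c(\epsilon)n}$. This time is governed by the probability that $Z$ rises by $\epsilon n$ above equilibrium, which is $e^{-\Theta(\epsilon^2 n)}$-type for the birth--death chain. Stabilization then occurs before the sink removes $\epsilon n$ mass whenever $q_n e^{c(\epsilon)n}\to 0$. Conversely, the final density cannot be far below $p$, because remaining mass stays active. Taking $\epsilon\to0$ along $q_n = e^{-o(n)}$ gives $S_n/n\to p$.

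More precisely, we will show that for each $\epsilon$ the expected number of steps before $A$ hits $0$ from a state with $A\ge \epsilon n$ is at least $e^{c\epsilon n}$ when $Z<(p-\epsilon)n$. In that regime $A$ has positive drift, so hitting $0$ requires a large deviation. Stabilizing with $S_n\le (p-2\epsilon)n$ would therefore require the sink to remove all remaining mass. Since the sink removes mass only at rate $q_n$, this event is controlled by comparing the sink-loss time $\epsilon n/q_n$ with the stabilization time.

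\textbf{``Only if'' direction.} Suppose $q_n\ge e^{-\delta n}$ along a subsequence for a fixed $\delta>0$. We show that the stabilization time is then at least $e^{c n}$ with $c>\delta$, using the positive-drift region with a gap $\epsilon$ chosen so that $c(\epsilon)>\delta$. In that time the sink drains mass. A Gambler's-ruin comparison then shows that $A$ reaches $0$ only after mass has dropped below $(p-\epsilon)n$, so $S_n/n\le p-\epsilon$ with positive probability and $S_n/n\not\to p$.

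The main obstacle is the sharp large-deviation rate: we need $c(\epsilon)\to 0$ as $\epsilon\to 0$ for the ``if'' direction and $c(\epsilon)>0$ for each fixed $\epsilon$ for the ``only if'' direction. We plan to obtain it from an explicit quasi-stationary analysis of the two-dimensional chain, projected onto total mass, using a Lyapunov/exponential martingale of the form $e^{\theta(Z-pn)}$.
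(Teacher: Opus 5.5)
Your proposal has genuine gaps, and the first is in the reduction itself.

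\textbf{The $(A,Z)$ chain is not an exact encoding of ARW.} When a jump lands on a sleeping particle, two active particles share that site. Every sleep instruction used there is wasted until one of them leaves. The abelian property lets you choose the order of topplings but not the instructions, so this cannot be arranged away. Your own drifts show the problem: $Z$ goes up at rate $p$ and down at rate $(1-p)Z/n$, and these balance at $Z\approx pn/(1-p)$, not $pn$ (and at $Z\ge n$ when $p\ge 1/2$). The paper fixes this with the purgatory vertex of \thref{prop:stop}. Jumpers are released one at a time, so each landing resamples a site as a fresh $\Ber(p)$. The sleeping count is then a stationary chain with exactly $\Bin(n,p)$ marginals and drift $p-Y/n$. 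Stabilization is the first time it meets the nonincreasing boundary $n-(\text{sink count})$.

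\textbf{Your ``if'' direction never produces an upper bound on $S_n$.} The claim $S_n\le pn+O(\sqrt{n\log n})$ ``by coupling with $q_n=0$'' fails on both counts. With $q_n=0$ no mass leaves, so stabilizing $1_{[n]}$ gives $S_n=n$. And the bound contradicts \thref{thm:gumby}, which for $q_n=e^{-n^{1/4}}$ gives $S_n-pn\sim\sigma\sqrt{2}\,n^{5/8}$ in probability. Your key estimate is an \emph{upper} bound on the stabilization time. It only helps when $q_n e^{c(\epsilon)n}\to 0$, which never happens when $q_n=e^{-o(n)}$ and $c(\epsilon)>0$. What is needed is the reverse, and it has three parts:
\begin{itemize}
\item at any fixed step, the sleeping count deviates from $pn$ by $\epsilon n$ with probability $e^{-\Omega(n)}$, by Cram\'er's theorem;
\item stabilization happens before all $n$ particles reach the sink, which takes $O(nr_n)=e^{o(n)}$ steps w.h.p.;
\item $S_n$ has the law of the sleeping count at the stabilization step.
\end{itemize}
A union bound then gives $|S_n-pn|<\epsilon n$ w.h.p. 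This is the paper's proof, and it relies on the exact stationary marginals your chain lacks.

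\textbf{The ``only if'' direction is set up and concluded backwards.} The negation of $q_n=e^{-o(n)}$ is that $q_n\le e^{-\delta n}$ along a subsequence for some $\delta>0$, i.e.\ an exponentially \emph{weak} sink. Your hypothesis $q_n\ge e^{-\delta n}$ includes, for example, $q_n=1$, where $S_n\sim\Bin(n,p)$ exactly. So your conclusion that $S_n/n\le p-\epsilon$ with positive probability is false. A density below $p$ is in fact the easy side to exclude; the union bound above handles it. Your claim that $A$ has positive drift when $Z<(p-\epsilon)n$ is also backwards: with your own rates, its drift there is at most about $-p^2-(1-p)\epsilon$.

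The actual phenomenon is a density \emph{above} $p$. Suppose $q_n\le e^{-cn}$, and pick $\epsilon$ with $D(p+\epsilon\,\|\,p)<c/4$. Consider the moment the mass has dropped to $\lfloor(p+\epsilon)n\rfloor$; if stabilization happened earlier, $S_n$ is only larger. From then on, successive complete resamplings of the sleeping count give independent $\Bin(n,p)$ values. One of them exceeds $(p+\epsilon)n$ within $e^{cn/3}$ resamplings, i.e.\ within $e^{cn/2}$ steps, w.h.p. A sink visit in that window has probability at most $e^{cn/2}q_n\to 0$. Hence $S_n\ge(p+\epsilon)n$ w.h.p.

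Note that your upper bound on the stabilization time is exactly the right ingredient for this direction, provided $c(\epsilon)\to 0$ as $\epsilon\to 0$. You have swapped the two requirements: the ``if'' direction needs $c(\epsilon)>0$ for each fixed $\epsilon$, and the ``only if'' direction needs $c(\epsilon)\to 0$.
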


 Since our argument is shorter than the over thirty-page argument in \cite{jarai2024critical}, we prove the analogue of \eqref{eq:1.2} for $S_n$.
Recall that $\alpha_n := \sqrt{p(1-p)n \log n}$.

\begin{theorem}\thlabel{thm:1.2}
    If $q_n = 1/(n + 1)$, then $\P( |S_n - (pn + \alpha_n)| > \epsilon \alpha_n) \to 0$ for all $\epsilon >0$.    
\end{theorem}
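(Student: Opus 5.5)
The plan is to reduce the stabilization of $1_{[n]}$ on $\bar K_{n+1}$ to a two-dimensional Markov chain and then analyze when that chain first reaches an all-sleeping state. By the abelian property we may topple in any order. On the complete graph with self-loops all sites are exchangeable, so the state is essentially determined by the number of sleeping particles $S$ and the number of active particles $A$. The total $N=S+A$ starts at $n$ and decreases only through the sink.

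\textbf{Step 1: the chain $(A,S)$.} We topple active particles one instruction at a time, always choosing an active particle that is alone on its site when one exists. A single step then does one of four things:
\begin{itemize}
\item with probability $p$ the particle falls asleep, so $(A,S)\mapsto(A-1,S+1)$;
\item with probability $(1-p)q_n$ it is absorbed by the sink, so $A\mapsto A-1$;
\item with probability $(1-p)(1-q_n)S/n$ it lands on a sleeper and wakes it, so $(A,S)\mapsto(A+1,S-1)$;
\item otherwise it moves and the state is unchanged.
\end{itemize}
Collisions between active particles are where the reduction is inexact. I would control them by comparison with a weak stabilization in the spirit of the operators $\WeakStab$, $\WeakOdom$. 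The idea is to sandwich the true odometer between two versions in which doubly occupied sites are handled favourably and unfavourably. Since $A=o(n)$ during the relevant terminal phase, collisions should only shift $S$ by $o(\alpha_n)$.

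\textbf{Step 2: the slow variable and its window.} With $q_n=1/(n+1)$, each jump kills its particle with probability about $1/n$, so $N$ decreases by about one per order-$n$ jumps. On this time scale the fast coordinate $A$, with $N$ frozen, is a birth–death chain. Its up-rate is $(1-p)(N-A)/n$ and its down-rate is $p$. Its drift vanishes at
\[
A^*\approx \frac{N-pn}{1-p},
\]
and it relaxes on an $O(n)$ time scale with stationary fluctuations of order $\sqrt{n}$; the variance constant should be $p(1-p)$ up to the correct normalization. So $A$ behaves like an Ornstein–Uhlenbeck process centred at $A^*(N)$. Stabilization ends the first time $A$ hits $0$, and then $S_n=N$.
\begin{itemize}
\item While $N<pn$, the centre $A^*$ is negative, so $A$ quickly hits $0$. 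This shows $S_n\ge pn$ up to small errors.
\item While $N=pn+x$ with $x\gg\sqrt n$, the chance that the OU-like process reaches $0$ during one relaxation period is roughly $\exp(-x^2/(2p(1-p)n))$.
\item Removing $x$ particles takes about $x$ relaxation periods, and those periods are approximately independent.
\end{itemize}
Hence the process stops near the level $x$ where $\sum_{x'\ge x}\exp(-x'^2/(2p(1-p)n))$ becomes of order one. Solving this gives $x=\alpha_n(1+o(1))$, which is the level $pn+\alpha_n$ in the statement. Along the way, a Chernoff bound on the sink losses shows that $N$ passes each level essentially monotonically and at the predicted speed.

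\textbf{Step 3: the two tail bounds.}
\begin{itemize}
\item \emph{Upper tail.} Show $\P(S_n>pn+(1+\epsilon)\alpha_n)\to0$. While $N$ is in the window $[pn+(1+\epsilon)\alpha_n,\,n]$, the process $A$ cannot hit $0$. A union bound over the at most $n^{O(1)}$ relaxation periods, using the Gaussian-type large-deviation estimate for the birth–death chain, makes the total probability $n^{-(1+\epsilon)^2+1+o(1)}\to0$.
\item \emph{Lower tail.} Show the process reaches $0$ before $N$ drops below $pn+(1-\epsilon)\alpha_n$. Over the $\epsilon\alpha_n$ roughly independent periods in the window, each hit probability is at least $n^{-(1-\epsilon)^2+o(1)}$. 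These attempts fail simultaneously with vanishing probability, which I would make rigorous via a second-moment or renewal argument.
\end{itemize}

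\textbf{Main obstacle.} I expect the hardest step to be the sharp moderate-deviation estimate for the hitting probability of $0$ by the birth–death chain, uniformly in the slowly drifting centre $A^*(N)$. It needs the exact constant $p(1-p)$ in the exponent together with enough decorrelation between relaxation periods for the lower bound. I would attempt it by computing the stationary law of the frozen birth–death chain explicitly, since it is a product of rates. The probability of hitting $0$ per period then comes from the ratio $\pi(0)/\pi(A^*)$ times a polynomial factor, and a coupling between frozen and slowly varying $N$ would absorb the drift in $N$.
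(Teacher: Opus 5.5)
There is a genuine gap, and it is in Step 1: your chain is wrong at leading order, not just up to rare collisions between active particles. When a jumping particle lands on a sleeper, that site holds two active particles, and sleep instructions there do nothing until one of them leaves. This happens at every wake-up. That is an order-one fraction, about $(1-p)S/n$, of all steps, even when $A=o(n)$. So ``falls asleep with probability $p$'' does not apply to the particles you create.

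Your Step 2 rates expose this. With up-rate $(1-p)(N-A)/n$ and down-rate $p$, the drift vanishes at $A=N-pn/(1-p)$, not at $(N-pn)/(1-p)$. At $A=0$, $N=pn+O(\alpha_n)$ the drift is about $-p^2$, so this chain would stabilize near density $\min\{p/(1-p),1\}$ rather than $p$. The centre you quote is the right one: it is the line from $(n,n)$ to $(pn,0)$ of \cite{jarai2024critical}. To get it you must account for the forced extra jump after a wake-up, e.g.\ by letting the particle that wakes a sleeper keep moving until it reaches an empty site, as in \cite{jarai2024critical}. Then a non-sleeping step wakes $S/(n-N)$ sleepers on average instead of $S/n$, and the drift $-p+(1-p)S/(n-N)$ vanishes exactly on $N=pn+(1-p)A$. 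A weak-stabilization sandwich designed for $o(\alpha_n)$ errors cannot repair an order-one change in the drift. Your upper tail must also cover the early phase, where $A=\Theta(n)$ and active particles collide constantly.

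As a result, the constant $p(1-p)$, which is the whole content of $\alpha_n$, is asserted rather than derived. For the corrected chain the up-steps are geometric numbers of wake-ups. You would have to check that the per-step noise $2p/(1-p)$ and restoring rate $1/n$ give $A$ stationary variance $pn/(1-p)$, hence a hitting exponent $x^2/(2p(1-p)n)$ from $A^*=x/(1-p)$; this is the kind of computation that makes \cite{jarai2024critical} long.

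Step 3 is also off by a factor of $2$. The level $pn+c\alpha_n$ is reached in one relaxation period with probability $n^{-c^2/2+o(1)}$, not $n^{-c^2}$. With your exponent, the expected number of successes in the lower tail, $\epsilon\alpha_n n^{-(1-\epsilon)^2}$, tends to $0$ for small $\epsilon$, so that argument does not close as written. Summing the correct level-dependent probabilities gives $n^{\epsilon-\epsilon^2/2+o(1)}\to\infty$ in the lower tail and $n^{-\epsilon-\epsilon^2/2+o(1)}\to 0$ in the upper tail. The upper tail genuinely needs this sum: the worst-case bound times the $\Theta(n)$ periods does not vanish for small $\epsilon$.

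The paper avoids all of this with the purgatory construction of \thref{prop:stop}. Routing every jump through an extra vertex where particles cannot sleep makes the number $Y(t)$ of sleeping sites an exact $p$-biased Ehrenfest urn started from its stationary law $\Bin(n,p)$. Your $A$ becomes the purgatory count $B(t)-Y(t)$. Stabilization is the first time $Y$ meets the boundary $B(t)=n-Z(t)$, which is driven by independent lazy steps. There are no collisions, no fast--slow coupling is needed, and $\sigma^2=p(1-p)$ is simply the binomial variance. The theorem then reduces to the three running-maximum statements \eqref{eq:jarai-goal1}--\eqref{eq:jarai-goal3}. These follow from \thref{lem:max-tail} together with concentration of the passage times $t(k_i)$.
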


Our main result that $S_n$ has a Gumbel scaling limit is sharper and more general than \thref{thm:1.2}.
Throughout this paper we let $\sigma :=\sqrt{ p (1-p)}$ and $b := \log(\sigma/\sqrt{2 \pi})$. Additionally, we set $$ r_n := 1/q_n, \quad a_n := \sigma \sqrt{n},\text{ and }  f_n := \sqrt{ 2 \log(r_n/\sqrt n)}.$$
We scale $S_n$ to 
$$
    G_n := \frac{S_n - pn - a_n f_n}{a_n / f_n}  - b,
$$
and let $G$ denote a random variable with the standard Gumbel distribution $\P(G \le x) = \exp(-e^{-x})$ for all $x \in \R$.

\begin{theorem} \thlabel{thm:gumby}
Assume that
    \begin{align}\label{eq:regime}
n^{1/2 + \epsilon} \ll r_n = \exp(o(n^{1/3})) \text{ for some } \epsilon >0. 
\end{align}
Then, as $n \to \infty$, 
\begin{equation}\label{eq:in-distr}
    G_n \xrightarrow[]{d} G.
\end{equation}
Moreover, 
\begin{equation}\label{eq:var}
    \var(G_n) \to \var(G) = \frac{\pi^2}{6}.
\end{equation}
\end{theorem}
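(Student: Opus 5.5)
The plan is to reduce $S_n$ to the maximum of a nearly stationary Markov chain that is observed over a random, roughly geometric, number of effectively independent trials. The Gumbel law then follows by a Poisson approximation for exceedances.

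\textbf{Reduction.} By the abelian property, the stabilization of $1_{[n]}$ can be carried out in any order. On $\bar K_{n+1}$ every jump lands on a uniform site of $[n]$ or in the sink, so the full configuration reduces, up to exchangeability, to a pair $(A_t,Z_t)$: the number of active particles and the number of sleeping sites. I would topple one active particle at a time and record the induced chain. Away from the boundary, the number of sleeping sites performs a birth--death walk with drift toward $pn$. At each step it gains $+1$ when an isolated particle sleeps and loses $-1$ when a jump lands on a sleeper. The step variance makes $Z$ fluctuate on the scale $a_n=\sigma\sqrt n$. Each jump is absorbed by the sink with probability $q_n=1/r_n$, which slowly drains particles. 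Stabilization ends when $A_t=0$, and then $S_n=Z_{\text{end}}$.

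The key structural claim is that stabilization can end only at a running near-maximum of the sleeper count. Because the total number of particles $A+Z$ decreases only through the sink, it ends at a level the walk $Z$ has climbed to. So $S_n$ should equal, up to $o(a_n/f_n)$ errors, the first level $\ell$ such that the particle budget drains to $\ell$ before $Z$ exceeds $\ell$ again. I will make this precise with a monotone coupling of the chain to a sleeper process with the particle count frozen at its current value.

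\textbf{Exceedance count.} Fix $x\in\R$ and set $\ell_n(x)=pn+a_nf_n+(x+b)a_n/f_n$. I would show that $\P(S_n\le \ell_n(x))$ is close to the probability that, during the roughly $r_n n$ steps the sink needs to remove about $(1-p)n$ particles, the walk $Z$ makes no excursion above $\ell_n(x)$ while the particle count sits near that level. The time the walk spends in this window is of order $r_n/\sqrt n$ in units of relaxation time $n$, with a constant factor coming from the drain rate. Over that time, excursions of $Z$ above $\ell_n$ should be rare and asymptotically independent. I would establish this with a Chen--Stein or renewal-theoretic Poisson approximation, using the $O(n)$ mixing time of the linear-drift birth--death chain, which is essentially Ornstein--Uhlenbeck. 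The expected number of exceedances is then
\[
\mu_n(x)\approx \frac{r_n}{\sqrt n}\,\frac{\sigma}{\sqrt{2\pi}}\,\frac{1}{f_n}\,\phi\text{-tail}\bigl(f_n+(x+b)/f_n\bigr)\cdot f_n,
\]
which, once the Gaussian tail is expanded, becomes $e^{-x}$. The constant $b=\log(\sigma/\sqrt{2\pi})$ should come from normalizing this exceedance intensity, and $f_n=\sqrt{2\log(r_n/\sqrt n)}$ is exactly the level at which $(r_n/\sqrt n)e^{-f_n^2/2}$ is of order one. This gives $\P(G_n\le x)\to\exp(-e^{-x})$. The lower bound $r_n\gg n^{1/2+\epsilon}$ guarantees $f_n\to\infty$ polynomially in $\log n$, so that boundary effects at the start of stabilization are negligible.

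\textbf{Main obstacle and variance.} The hardest step is the tail estimate for $Z$ at height $f_n$ standard deviations, which is needed with relative error $o(1)$. A plain CLT is insufficient here. I would use a Cramér-type moderate deviation or local limit theorem for the stationary law of the birth--death chain, which is explicitly computable from detailed balance. That estimate is valid precisely when $f_n=o(n^{1/6})$, i.e.\ $\log r_n=o(n^{1/3})$, which explains the upper bound in \eqref{eq:regime}.

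For \eqref{eq:var}, convergence in distribution plus uniform integrability of $G_n^2$ suffices. On the upper side, a union bound over the $O(r_n n)$ steps with the same moderate-deviation tail gives exponential tails $\P(G_n>x)\lesssim e^{-x}$. On the lower side, the probability of no exceedance over many independent blocks gives the doubly exponential bound $\P(G_n<-x)\lesssim \exp(-ce^{x})$. For very negative deviations I would use the crude drift of $Z$ toward $pn$ so that the bound covers the full range.
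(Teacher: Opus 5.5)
Your extreme-value architecture matches the paper's: a Poisson count of exceedance clusters against a slowly draining particle budget, moderate deviations that need $f_n=o(n^{1/6})$, and uniform integrability for the variance. But there are two gaps.

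\textbf{The reduction.} With one toppling at a time, $(A_t,Z_t)$ is not a Markov chain. A sleep instruction succeeds only if the toppled particle is alone, and every jump onto a sleeper creates a doubly occupied site. So the law of the next increment of $Z_t$ depends on occupation numbers you are not tracking.

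You could repair this as in \cite{jarai2024critical}, moving a jumping particle until it reaches an empty site or the sink. Then $(A_t,Z_t)$ is Markov, but $Z_t$ is no longer a birth--death walk, since one step can wake a geometric number of sleepers. Moreover, with $N=A_t+Z_t$ held fixed, its drift is about $p-(1-p)Z_t/(n-N)$. This vanishes at $pn-\frac{p}{1-p}(N-pn)$, not at $pn$. In the relevant window $N=pn+\Theta(a_nf_n)$ that is a displacement of order $a_nf_n$. A comparison with a walk centred at $pn$ is therefore off by far more than the $o(a_n/f_n)$ precision the Gumbel limit needs.

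The missing idea is the purgatory vertex in the proof of \thref{prop:stop}. By the abelian property, route every jump through an extra site where particles cannot sleep, and release them one at a time. Each release then resamples a single uniform site as an independent $\Ber(p)$. Consequently the sleeper count is exactly the $p$-biased Ehrenfest urn started from $\Bin(n,p)$. The budget $B(t)$ drops by one precisely on independent lazy steps of probability $q_n$, and $S_n\overset{d}{=}Y(\mathfrak j)$ holds exactly.

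The paper uses this exactness at every stage:
\begin{itemize}
\item stationarity gives $\E\,\ExcTime(T)=T\,\P(s(0)\ge x)$;
\item the conditionally independent Bernoulli transition law, together with Berry--Esseen, gives the cluster size $\E_{x'}\ExcTime(\log n)\sim x^{-2}$ behind \thref{lem:max-tail};
\item coupon-collector regeneration makes the block indicators $X'_k$ independent.
\end{itemize}

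\textbf{The variance, upper side.} A union bound over the $O(r_nn)$ steps counts each step of an exceedance cluster separately. A cluster at level about $f_n$ lasts on the order of $n/f_n^2$ steps, an $f_n^{-2}$ fraction of the relaxation time $n$. So the union bound only gives $\P(G_n>x)\lesssim (n/f_n^2)e^{-x}$. This is vacuous for $0\le x\le \log(n/f_n^2)$ and yields no integrable dominating function for $x\,\P(G_n>x)$.

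The paper instead bounds $\P(G_n>x_0)\le \E X=\nu\lesssim e^{-x_0}$ uniformly for $0\le x_0\le 3\log n$, using the cluster-corrected block estimates. This is why $x_0$ may depend on $n$ in \eqref{eq:xn-range}. The range $x_0>3\log n$ is handled with the trivial bound $|G_n|=o(n)$.

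The same clustering correction is needed in your displayed intensity. Clusters above level $y$ arrive at rate $\mu_y=ye^{-y^2/2}/\sqrt{2\pi}\sim y^2\Tail(y)$ per unit relaxation time. Integrating along the boundary gives $\frac{a_n}{q'_n}\int_y^\infty\mu_u\,du\sim\frac{\sigma r_n}{\sqrt{2\pi n}}e^{-y^2/2}$, which tends to $e^{-x}$ at $y=f_n+(x+b)/f_n$. If your ``$\phi$-tail'' means $\Tail$, your expression is smaller than this by a factor $\sqrt{2\pi}\,f_n$ and tends to $0$.
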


Our last result concerns \emph{fixed-energy} ARW on $\bar K_{n+1}$ in which we set $q_n = 0$ so that no particle ever moves to the sink. Let $\Jumps=\Jumps_{m_n,\lambda}$ be the total number of jumps taken by any particle when stabilizing the configuration with $1\leq m_n\leq n$ active particles placed uniformly at random in $[n]$.

\begin{theorem} \thlabel{thm:density}  Suppose that $m_n = \lceil \mu n\rceil $ for $\mu \in (0,1]$. 
    \begin{enumerate}[label = (\alph*)]
        \item If $\mu >p$, then $\Jumps \geq e^{cn}$ w.h.p.\ for some $c = c(\mu,p)>0$.
        \item If $\mu =p$, then $\Jumps / (n \log n) \overset{\P} \to 1/2$.
        \item If $\mu <p$, then $\Jumps \leq C n$ w.h.p.\ for some
        $C = C(\mu,p)< \infty$.
    \end{enumerate} 
\end{theorem}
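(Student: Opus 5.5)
The plan is to reduce stabilization on $\bar K_{n+1}$ with $q_n=0$ to a one-dimensional birth--death chain, using the abelian property. By the abelian property, the total number of jumps $\Jumps$ does not depend on the order in which instructions are used. So I will use a convenient order: at each stage I move a single designated active particle, and I put particles that get woken onto a queue to be processed later.

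\textbf{The reduced chain.} Call an active particle \emph{fresh} if it is alone on its site. A fresh particle reads its next instruction. With probability $p$ this is a sleep instruction, and the particle falls asleep. With probability $1-p$ it jumps to a uniformly chosen site of $[n]$, since the self-loops make the target uniform.

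Let $s$ be the number of sleeping particles, which occupy distinct sites. Because of the self-loops and the fact that no site is singled out, a jump lands on a sleeping site with probability $s/n$ and on an empty site otherwise. If it lands on a sleeping site, the sleeper wakes and the walker eventually jumps away, since a sleep instruction does nothing while another particle is present. The woken particle is then fresh, so with probability $p$ it immediately re-sleeps without jumping.

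Absorbing these sub-steps, each jump changes $s$ as follows:
\begin{align*}
s \to s+1 \ \text{w.p. } p(1-s/n), \qquad s\to s-1 \ \text{w.p. } (1-p)s/n,
\end{align*}
and otherwise $s$ is unchanged. The number of jumps is counted up to a bounded-mean geometric correction per step; these corrections will be controlled by a law of large numbers. The process starts with all $m_n$ particles active, so $s=0$ up to an $O(1)$-scale correction coming from initial collisions among the uniformly placed particles; I will handle that correction by first resolving the shared sites. Stabilization ends exactly when $s$ first hits $m_n$. Hence $\Jumps$ is essentially the hitting time $T_{m_n}$ of $m_n$ for this chain started near $0$. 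The drift is $p-s/n$, which vanishes at $s=pn$; this is why $p$ is the critical density.

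\textbf{The three regimes.}
\begin{enumerate}[label = (\alph*)]
\item For $\mu>p$ the chain must climb from $pn$ to $\mu n$ against a drift of order $-(\mu'-p)$ on $[\mu' n,\mu n]$, where $\mu'$ lies between $p$ and $\mu$. The chain is reversible with stationary law $\pi(s)\propto \binom{n}{s}p^s(1-p)^{n-s}$. A standard gambler's-ruin or stationary-measure argument then shows that the number of excursions from $\mu' n$ needed before reaching $\mu n$ is at least $e^{cn}$ w.h.p. The exponent $c$ is given by the binomial large-deviation rate between $\mu'$ and $\mu$.
\item For $\mu<p$ the drift on $[0,\mu n]$ is at least $p-\mu>0$. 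Azuma--Hoeffding applied to $s_t-\sum(\text{drift})$ then gives $T_{m_n}\le Cn$ w.h.p.
\end{enumerate}

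\textbf{The critical case (b), which I expect to be the main obstacle.} For $\mu=p$, write $x=pn-s$. Then $\E[\Delta x]=-x/n$, so the deterministic approximation $x_t\approx pn\,e^{-t/n}$ reaches the diffusive scale $\sqrt n$ at time $t\approx \tfrac12 n\log n$. Beyond that point, the chain is an Ornstein--Uhlenbeck-type process with per-step variance $\Theta(1)$ and relaxation time $n$. Starting from a distance $O(\sqrt n)$, it hits the level $pn$ within $O_\P(n)=o(n\log n)$ further steps.

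To make this rigorous, I would do two things:
\begin{itemize}
\item Use the martingale $M_t=(1-1/n)^{-t}x_t$ and its quadratic variation to show that $x_t/(pn\,e^{-t/n})\to1$ uniformly, as long as $x_t\ge n^{1/2+\delta}$. This gives the lower bound $T\ge(\tfrac12-\delta)n\log n$, because the fluctuations of $x$ are $O(\sqrt n)$ at those times, so the chain cannot reach $0$ earlier.
\item For the upper bound, run from $x\le n^{1/2+\delta}$ and compare with the OU scaling limit to show the hitting time of $0$ is $O(n^{1+2\delta})$.
\end{itemize}
The delicate points are the uniform concentration along the whole descent and the matching of the two phases. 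After that, letting $\delta\to0$ yields $\Jumps/(n\log n)\to1/2$.
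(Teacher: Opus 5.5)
\textbf{Gap 1: the upper bound in (b).} As written, this bound does not close. After your first phase you are at $x\le n^{1/2+\delta}$ around time $(\frac12-\delta)n\log n$, and you propose to bound the remaining hitting time by $O(n^{1+2\delta})$. For every fixed $\delta>0$ we have $n^{1+2\delta}\gg n\log n$, so this gives no bound of the form $(\frac12+\delta')n\log n$, and letting $\delta\to0$ afterwards does not rescue it. A diffusive (height-squared) estimate discards the restoring drift $-x/n$. That drift is exactly what carries $x$ from $n^{1/2+\delta}$ down to the $\sqrt n$ scale in only about $\delta n\log n$ steps.

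What you need is the following. Continue your martingale $M_t=(1-1/n)^{-t}x_t$ up to $t_0=\lceil(\frac12+\delta)n\log n\rceil$. Its $O(\sqrt n)$ fluctuations give $x_{t_0}\le\sqrt n\log\log n$ w.h.p., unless $0$ has already been hit. Then, since the drift of $x$ points toward $0$ while $x>0$, couple with a lazy symmetric walk. By the reflection principle, that walk covers distance $\sqrt n\log\log n$ within $\delta n\log n$ further steps w.h.p. This is the paper's argument, except that the paper gets $Y(t_0)\ge pn-\sqrt n\log\log n$ from coupon collecting: at $t_0$ at most $\sqrt n$ coordinates are unvisited, and $Y(t)\mid n_t\sim\Bin(n_t,p)$.

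\textbf{Gap 2: the derivation of the reduced chain.} Your reduced chain is the right one; it is the paper's binomial update process with $q_n=0$. But your derivation of it has a gap. The dichotomy ``a jump lands on a sleeping site w.p.\ $s/n$ and on an empty site otherwise'' presumes that no non-sleeping particle is sitting in $[n]$. That fails in your order:
\begin{itemize}
\item Initially there are $m_n=\Theta(n)$ active particles on $[n]$, with $\Theta(n)$ multiply occupied sites rather than $O(1)$.
\item Your queued particles also sit on sites.
\end{itemize}
So a walker hits a site holding an active particle with probability bounded away from $0$, and $s$ does not evolve by your transition law.

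The missing device is the paper's purgatory vertex. Route every jump to an auxiliary vertex where particles cannot sleep, and release particles from it one at a time to uniform sites, so that $[n]$ holds only sleeping particles between steps. Then invoke the abelian property to transfer the jump count. With this, each step is exactly one jump, so there is no ``geometric correction.'' That matters: a per-step correction of mean $c\neq1$ would change the limit in (b) to $c/2$. Putting the $m_n$ initial particles in purgatory also justifies starting at $s=0$, with $\Jumps=T_{m_n}-m_n$.

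\textbf{The remaining parts.} Granting the reduction, the rest of your plan consists of valid alternatives to the paper's arguments:
\begin{itemize}
\item For (a), your $e^{-\Omega(n)}$ escape probability for the reversible birth--death chain replaces the paper's domination $Y(t)\preceq\Bin(n,p)$ with a union bound.
\item For (c), your Azuma argument replaces the paper's drifted-walk coupling.
\item For the lower bound in (b), your martingale with Doob's inequality replaces the paper's coupon-collector and moderate-deviation argument.
\end{itemize}
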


Hence $\Jumps$ undergoes a phase transition at $\mu=p$.

\subsection{Discussion}

\thref{thm:gumby} is an exact description of the critical window for a broad range of sink strengths and completes the project initiated by \cite{jarai2024critical}. Moreover, \eqref{eq:var} implies that 
$$
    \var(S_n) \sim \frac{\sigma^2 \, \pi^2}{12} \, \frac{n}{\log(r_n / \sqrt{n})}= \Theta(n/\log r_n)=o(n)
$$
for $r_n$ satisfying \eqref{eq:regime}.
Let $S_n= I_1 + \cdots + I_n$ with $I_v=I_v(n)$ an indicator random variable on site $v$ being occupied.
It follows by rearranging the variance formula for a sum of identically distributed random variables that
\begin{align*}\label{eq:cov}
\cov(I_1,I_2) \sim - \f{\sigma^2}{n}
\end{align*}
is negative for large $n$.

Levine and Silvestri conjectured that ``rigid repulsion among particles'' makes it so that ``particle counts\dots\ have significant negative correlations" for driven-dissipative ARW on boxes in $\mathbb Z^d$ \cite{LS24}. Analogous to the hyperuniform systems---such as {Coulomb systems}, {determinantal processes}, and {Gaussian zero processes}---surveyed by Ghosh and Lebowitz in \cite{ghosh2017fluctuations}, they conjectured that ARW exhibits \emph{hyperuniformity} i.e., sublinear variance (in the graph volume) for the number of sleeping particles at stationarity. 
 \thref{thm:gumby} implies that $S_n$ is hyperuniform. Hence, the law of $(I_1,\hdots, I_n)$ is not a product measure. We are unaware of any other growing family of graphs for which correlations had been proven to occur for ARW. Our conclusions of hyperuniformity along with the exact first order of the covariance are even stronger findings.
 
Another interesting feature of \thref{thm:gumby} is the appearance of the Gumbel distribution. This is a notable departure from the systems in \cite{ghosh2017fluctuations} which all exhibit Gaussian scaling limits. The \emph{least action principle} \cite{rollaSurvey} roughly says that, if the instructions are fixed in advance, ARW minimizes jumps to the sink and thus leaves a maximal amount of sleeping particles in $[n]$ compared to toppling sequences that are permitted to selectively wake sleeping particles. The extremal-type scaling limit in \thref{thm:gumby} gives a quantitative expression of this maximality and raises the question of whether Gumbel scaling limits are a universal feature of ARW.

The \emph{density conjecture} states that the limiting density for driven-dissipative ARW equals the fixed-energy critical value, as established in \thref{thm:iff} and \thref{thm:density}. Originally formulated for sandpile dynamics on boxes in $\mathbb{Z}^d$ versus the phase transition for stabilization time on the torus \cite{dickman1998self}, the conjecture was recently proven for ARW in $d=1$ \cite{hoffman2024density, forien2025newproof} but remains open for $d \geq 2$. This conjecture is a central problem in the foundations of self-organized criticality because it relates the self-organized state with the conventional notion of criticality as a parameterized system held at a phase transition. In \thref{rem:activity}, we explain why \thref{thm:density} implies that the number of jumps at a fixed site tends to infinity at $\mu = p$ while remaining bounded w.h.p.\ when $\mu < p$. This establishes an analogue to the conjecture that ARW on $\mathbb{Z}^d$ stays active when initialized exactly at the critical density \cite[Section 1.3]{rollaSurvey}.

\subsection{Proof Overviews} \label{sec:overview}
We introduce a new approach for analyzing $S_n$.
Similar to \cite{jarai2024critical}, we stabilize $1_{[n]}$ and wait for a fluctuation in the number of sleeping particles to end the stabilization. Rather than attempting to track the intermediate states of the stabilization, we restrict our attention to a count of vertices that each contain a particle trying to fall asleep. This is an elementary Markov chain with the same law as a $p$-biased Ehrenfest urn \cite{karlin1965ehrenfest}.

We call the Markov chain the \emph{binomial update process} $Y(t) = Y_{n,q_n,\lambda}(t)$. We  define it presently without reference to ARW and make the connection explicit in the proof of \thref{prop:stop}.
Initially, $Y(0)$ is distributed as $\mathrm{Binomial}(n,p)$, viewing it as a sum of $n$ independent $\Ber(p)$ coordinates. 
At integer time steps $t\geq 1$, the chain takes a lazy step with probability $q_n$ so that $Y(t) = Y(t-1)$. Otherwise, we choose one of the $n$ coordinates uniformly at random and resample it as an independent $\Ber(p)$ variable, leaving the remaining coordinates unchanged. Set $Y(t) \in \{Y(t) -1, Y(t), Y(t) + 1\}$ to be the total number of coordinates equal to $1$ after this update. Let $Z(t)$ be the total number of lazy steps up to time $t$. Define the boundary $B(t) := n - Z(t)$ and let $$\mathfrak j= \mathfrak j_{n,q_n,\lambda} := \min \{ t \colon Y(t) = B(t)\}$$ be the first hitting time of the boundary. The key to our results is that $S_n$ and $Y(\mathfrak j)$ have the same distribution.

\begin{proposition} \thlabel{prop:stop}
   $S_n \overset{d} = Y(\mathfrak j)$.%
\end{proposition}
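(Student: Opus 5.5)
The plan is to run the stabilization of $1_{[n]}$ on $\bar K_{n+1}$ in a well-chosen legal toppling order and read off $Y$ directly from the configuration. By the abelian property of ARW, the law of the final stable configuration does not depend on the legal order in which active particles are toppled. The instructions are drawn afresh at each step, and in the form needed here each toppling of a particle is a sleep attempt with probability $p$ and otherwise a jump. A jump goes to the sink with probability $q_n$ and otherwise to a uniform site of $[n]$, self-loop included. So we may choose, adaptively, which active particle to topple next, and $S_n$ has the law of the number of particles left asleep by any such order.

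\textbf{Bookkeeping.} At every stage each site $v \in [n]$ carries a coordinate $\xi_v \in \{0,1\}$ and possibly a \emph{designated} particle; all other particles are \emph{extra} and active.
\begin{enumerate}[label=(\roman*)]
\item $\xi_v=1$ means $v$ has a designated particle whose last toppling was a sleep attempt. This particle is asleep if alone, and is woken (its sleep fails) as soon as another particle arrives.
\item $\xi_v = 0$ means either there is no designated particle, or the designated particle's last toppling was a jump instruction not yet executed.
\end{enumerate}
Initially every site's particle is designated and is toppled once, so the $\xi_v$ are i.i.d.\ $\Ber(p)$ and $Y(0)\sim\Bin(n,p)$.

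\textbf{One step.} While not stable, pick any particle that is extra or is designated with $\xi=0$, and let it jump.
\begin{enumerate}[label=(\roman*)]
\item With probability $q_n$ it goes to the sink. No coordinate changes (its origin keeps $\xi=0$, now possibly with no designated particle), and the particle count $B$ drops by one. This is exactly a lazy step with $Z$ incremented.
\item Otherwise it lands at a uniform $v\in[n]$ and becomes the designated particle there, demoting any previous designated particle at $v$ to extra (waking it, if it was sleeping).
\item The arriving particle is then toppled once at $v$, so $\xi_v$ is replaced by a fresh independent $\Ber(p)$. If the jumper left its origin, that origin now has $\xi=0$ as before, so its coordinate is unchanged.
\end{enumerate}
Hence $\xi$ evolves exactly as the binomial update process, $\sum_v \xi_v = Y(t)$, and the number of particles still in $[n]$ is $n - Z(t) = B(t)$.

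\textbf{Stopping time.} The particles split into designated ones with $\xi=1$ (there are $Y(t)$ of them), designated ones with $\xi = 0$, and extras. Therefore $Y(t)\le B(t)$, with equality if and only if every remaining particle is a lone designated particle that has made a sleep attempt. Equality is thus exactly the event that the configuration is stable, with $Y(t)$ sleeping particles. Before equality there is always a legal move available. It follows that the procedure stops exactly at $\mathfrak j$ and leaves $Y(\mathfrak j)$ sleepers, giving $S_n \overset{d}= Y(\mathfrak j)$.

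\textbf{Main obstacle.} The delicate step is justifying that this adaptive, particle-attached description is a legitimate ARW toppling procedure whose output has the law of $S_n$. The issues are the "sleep fails when another particle is present" rule and the choice among several particles at one site. I would handle this through the sitewise (Diaconis--Fulton) representation and the abelian property together with the least action principle from \cite{rollaSurvey}. I would check that each step above is a legal toppling and that, conditional on the past, the fresh instruction used is a $\Ber(p)$ sleep/jump with a uniform or sink destination. This checking is where I expect the most care to be needed.
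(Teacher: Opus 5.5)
\textbf{There is a genuine gap.} Your bookkeeping correctly shows that \emph{if} your procedure were a legal way to stabilize $1_{[n]}$ on $\bar K_{n+1}$, then $\xi$ would evolve as the binomial update process and the procedure would stop at $\mathfrak j$. But that ``if'' is the whole content of the proposition. As written, the procedure is not a legal ARW toppling sequence, so the check you defer to your last paragraph would fail. Three concrete problems:

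(a) A particle lands on a site with a sleeper. You wake the sleeper, topple the newcomer once, and on a sleep instruction set $\xi_v=1$. In ARW a sleep instruction at a doubly occupied site does nothing. So once the extra leaves, the remaining particle is still active and must be toppled again, but you declare it asleep.

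(b) A site can hold an extra and a designated particle with $\xi_v=0$. After the designated particle jumps away, the extra is alone, and you force it to jump. In ARW a lone particle that is toppled falls asleep with probability $p$.

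(c) A jump instruction ``drawn but not executed'' is not a state of the sitewise model. Either the instruction has been used, so the particle has already moved. Or it is still the next instruction at that site, so the next toppling there (e.g.\ the one you spend on a newly arrived particle) must use it rather than a fresh $\Ber(p)$.

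The abelian property only lets you reorder legal topplings; it does not let you change what a toppling does. The least action principle is also not the relevant tool.

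\textbf{The missing idea} is a device that makes holding a jumping particle legitimate. The paper adds a purgatory vertex. Every jump from $[n]$ goes to purgatory, where particles cannot sleep. Particles in purgatory jump to the sink with probability $q_n$ and otherwise to a uniform site of $[n]$. Toppling purgatory immediately after each arrival reproduces the original dynamics, so the abelian property on the augmented graph shows the final configuration on $[n]$ still has the law of $S_n$.

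Your extras and pending jumpers are then just the $B(t)-Y(t)$ particles in purgatory, and each step becomes genuinely legal:
\begin{enumerate}
\item topple purgatory once;
\item if the particle lands on a sleeping site, topple that site until a jump sends one particle back to purgatory (sleep instructions are no-ops there);
\item topple the lone remaining particle once.
\end{enumerate}
This resamples exactly one coordinate, as in your computation. The initial round of toppling every site once then legitimately yields i.i.d.\ $\Ber(p)$ coordinates, because jumps go to purgatory instead of landing on sites not yet toppled. Your stopping-time argument then goes through unchanged.
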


The proof of \thref{prop:stop} uses the abelian property of ARW to redirect all jumping particles through an added purgatory vertex where particles do not fall asleep. This makes the count of sites in $[n]$ with sleeping particles evolve with the same law as $Y(t)$ when we hold particles in purgatory as long as possible, with $Z(t)$ corresponding to the number of particles in the sink. Stabilization occurs when all particles are either sleeping on $[n]$ or in the sink, which corresponds to the hitting time $\mathfrak j$.

By \thref{prop:stop}, we can understand $S_n$ in terms of when $Y(t)$ hits the randomly, but predictably decreasing boundary $B(t)$. Since $Y(t)\sim \text{Binomial}(n,p)$ is stationary, the boundary will be hit when $B(t)$ is slightly above $pn$ and $Y(t)$ has an upward fluctuation. This is why we observe extremal-type Gumbel scaling. 
The proofs of Theorems~\ref{thm:iff}--\ref{thm:gumby} make this idea rigorous via progressively sharper interpretations of ``slightly above $pn$'' and ``upward fluctuation.'' The proof of \thref{thm:density} also uses this idea, but in the fixed-energy setting the boundary does not move and the binomial update process starts out of equilibrium.

The proof of \thref{thm:iff} goes by a union bound that uses tail estimates on the binomial distribution.  The proof of \thref{thm:1.2} refines into three stages and uses estimates on the running maximum of $Y(t)$ from \thref{lem:max-tail}. To prove \eqref{eq:in-distr} of \thref{thm:gumby} it is more convenient to work in continuous time. 
Our approach follows Aldous' Poisson clumping heuristic \cite{aldous2013probability}. To make it rigorous we use running maximum and exceedance time estimates for $Y(t)$ and a method in the spirit of the Chen-Stein method for Poisson approximation \cite{arratia1990poisson}. A small technical refinement to the proof of \eqref{eq:in-distr} is used to prove \eqref{eq:var}. The authors of \cite[Section 7]{jarai2024critical} referred to the Ornstein-Uhlenbeck process to heuristically explain the appearance of $\sigma$ in $\alpha_n$.
Though we do not use the connection directly, our calculations are also partly guided by the Ornstein-Uhlenbeck process, which is a scaling limit of $Y(t)$ \cite{karlin1965ehrenfest}. See Section~\ref{sec:setup} for more discussion. 

\subsection{Comparison to \cite{jarai2024critical}}

To prove  \eqref{eq:1.2} the authors of \cite{jarai2024critical} developed and analyzed a Markov chain that starts from $1_{[n]}$ and tracks the total number of particles as well as the total number of active particles. At each step in the chain, an active particle is selected and, if it does not fall asleep, is repeatedly moved until reaching either the sink or an unoccupied site. 
They employed martingale methods to show that the ordered pair of the total and active particle counts stays close to the straight line from $(n,n)$ to $(pn,0)$ and is likely to fixate at a terminal point $(pn+\alpha_n + o(\alpha_n),0)$ when an upward fluctuation in the number of sleeping particles results in a stable configuration. 

An advantage of their approach is that it gives insight into the dynamics of the stabilization as the particle counts descend to the terminal point. However, the complexity of the Markov chain makes analysis difficult. 
Involved calculations are needed to obtain the leading constant $\sigma$ of $\alpha_n$.  Further sharpening their method to obtain lower-order terms appears challenging. As discussed earlier, these terms are important because they characterize the critical state as hyperuniform, distinguishing it from a product measure.

\subsection{Further questions}
Regarding the necessity of hypothesis \eqref{eq:regime}, note that there must be a transition since $q_n=1$ trivially results in $S_n \sim \Bin(n,p)$ which has a Gaussian scaling limit. We believe that our approach could prove that the Gaussian scaling limit persists up until $r_n = O(n^{1/2 - \varepsilon})$ with a transition occuring in the $r_n = n^{1/2 + o(1)}$ window.  As for large values of $r_n$, we believe that the upper bound could be modified to obtain a similar result for $r_n = \exp(\Omega(n^{1/3}))$. We do not study this regime since the connection between the Binomial and Gaussian distributions deteriorates, so formulating the results would become more involved.
We also believe that a similar approach could give a Gumbel scaling limit for the stochastic sandpile model on the complete graph, which is a generalization of \cite[Theorem 2.1]{campailla2026stochastic} that found $1/2$ to be the limiting critical density. In this setting, the sampling method developed in \cite{campailla2026stochastic} changes our binomial update process to the classical Ehrenfest urn model \cite{ehrenfest1907zwei, kac1947random}.

\subsection{Organization}
Section~\ref{sec:BUP} proves \thref{prop:stop} and 
Section~\ref{sec:iff} proves \thref{thm:iff}.
Section~\ref{sec:1.2} sketches the proof of \thref{thm:1.2}.
Section~\ref{sec:density} proves \thref{thm:density}.
Section~\ref{sec:setup} provides the setup and heuristics for the proof of \thref{thm:gumby} including the statement of \thref{lem:max-tail}.
Section~\ref{sec:gumby} proves \thref{thm:gumby}.
Appendix~\ref{sec:m} contains the proof of \thref{lem:max-tail} and Appendix~\ref{sec:notation} explains our asymptotic and random variable notation.%

\section{Proof of \thref{prop:stop}}  \label{sec:BUP}

    Consider the following {purgatory} modification of ARW on $\bar K_{n+1}$ that adds an additional \emph{purgatory} vertex $v$. Begin with the configuration $1_{[n]}$. Throughout, particles on $[n]$ still jump at rate $1$ and fall asleep at rate $\lambda$, but each jump is to $v$. Particles on $v$ do not fall asleep; they jump at rate $1$, moving to the sink $n + 1$ with probability $q_n$ and otherwise to a uniform vertex on $[n]$. Using the site-wise construction of ARW (see for instance \cite{rollaSurvey}, and their notions of the \emph{instruction field}, \emph{stabilizing toppling sequences}, and the \emph{abelian property}) with the distribution for the instruction field as described above, we can see that the final configuration on $[n]$ follows the stationary distribution for the driven-dissipative chain on $\bar K_{n+1}$, without the extra vertex $v$. To see this, using the abelian property, after any particle jumps from $[n]$ to $v$, one may immediately topple $v$, sending the particle to a location on $[n + 1]$ according to the correct distribution. Thus, the number of particles that end up sleeping on $[n]$ has the same law as $S_n$.

    In the following paragraph we define a stabilizing toppling sequence for $1_{[n]}$ on the purgatory graph, breaking the sequence into \emph{steps}. We will let $\bar{Y}(t)$ denote the number of particles on $[n]$ and $\bar{Z}(t)$ the number of particles in the sink after completing step $t$, and define $\bar{B}(t) := n - \bar{Z}(t)$, the number of particles not in the sink. The remaining $n - \bar{Z}(t) - \bar{Y}(t) = \bar{B}(t) - \bar{Y}(t)$ particles are located at $v$. All $\bar{Y}(t)$ particles on $[n]$ after each step $t$ will be sleeping, so the system is stabilized after step $\mathfrak{j}'$, when we first have $\bar{B}(\mathfrak{j}') - \bar{Y}(\mathfrak{j}') = 0$, leaving $\bar{Y}(\mathfrak{j}') = \bar{B}(\mathfrak{j}')$ particles on $[n]$. Thus, it suffices to verify that $\bar{Y}(\mathfrak{j}') \overset{d}{=} Y(\mathfrak{j})$.

    We now describe the procedure, confirming along the way that 
    \begin{equation}\label{eq:purg-coupling}
        \big(\bar{Y}(t), \bar{Z}(t)\big)_{0 \le t \le \mathfrak{j'}} \overset{d}{=} \big(Y(t), Z(t)\big)_{0 \le t \le \mathfrak{j}}.    
    \end{equation}
    (Note that stopping times $\mathfrak{j},\mathfrak{j}'$ are defined identically.) In step $0$, we topple each site of $[n]$ once, so each particle either jumps to $v$ or falls asleep at its starting position. Then indeed $\bar{Y}(0) \sim \Bin(n,p)$, with the sites of $[n]$ acting as $\Ber(p)$ coordinates, and $\bar{Z}(0) = 0$. Now take some $t < \mathfrak{j}'$, so there is at least one particle in purgatory after step $t$. Begin step $t + 1$ by toppling $v$ once. If a particle was sent to the sink, which happens with probability $q_n$, end the step with $\bar{Z}(t+ 1) = \bar{Z}(t-1) + 1$ and $\bar{Y}(t + 1) = \bar{Y}(t)$, a lazy step for $Y(t)$. Otherwise, a particle is sent to a uniformly sampled site $i_t \in [n]$, which now has one or two active particles. If there are two, topple $i_t$ until a particle jumps to $v$. Either way, once one active particle remains, end the step by toppling $i_t$ once, putting the particle to sleep or sending it to $v$. Then $\bar{Y}(t + 1)$ differs from $\bar{Y}(t)$ in that coordinate $i_t$ is resampled, and $\bar{Z}(t + 1) = \bar{Z}(t)$. Thus, \eqref{eq:purg-coupling} holds, so indeed $Y(\mathfrak{j}) \overset{d}{=} \bar{Y}(\mathfrak{j}')$, which as discussed has the same law as $S_n$. \qed

\section{Proof of \thref{thm:iff}} \label{sec:iff}

Call time $t \ge 0$ a \emph{boundary exceedance} time if $Y(t) \ge B(t) = n - Z(t)$ (we let $\big(Y(t), B(t)
\big)$ run for all time $t \ge 0$, not just up until $\mathfrak{j}$). We note that $\mathfrak{j}$ may be equivalently defined as the first boundary exceedance time instead of the first boundary hitting time, since $Y(t) - B(t)$ changes by $0$ or $1$ at each time step, and at the start, $Y(0) \le n = B(0)$.  Recall $r_n = 1/q_n$ and suppose that $r_n = e^{o(n)}$. For $b \in (-\infty,  n]$, let $t(b)$ denote the first time $t$ that $B(t) \le b$, which is distributed as the sum of $\lceil n - b \rceil$ independent $\Geo(q_n)$-distributed random variables (and is decreasing in $b$). 

Necessarily $\mathfrak{j} \le t(0)$, since $Y(t) \ge 0$ at all $t \ge 0$. And w.h.p.\ $t(0) < \lceil 2 n r_n \rceil =: t^+$, since $t(0)/n \xrightarrow{\P} r_n$ by the law of large numbers. This implies that $Y(\mathfrak j)$ w.h.p.\ lies between the running minimum and maximum of $Y(t)$ after $t^+$ steps. As $Y(t) \sim \text{Binomial}(n,p)$ for all $t \ge 0$, by a union bound and using \thref{prop:stop}, we have for all $\varepsilon > 0$ that 
\begin{align}\label{eq:union-bound-subexp}
    \P\big(\big|S_n/n - p\big| \ge \varepsilon\big) 
        &= \P\big(\big|Y(\mathfrak{j})/n - p\big| \ge \varepsilon\big) \nonumber \\
        &\le \P(\mathfrak{j} \ge t^+) + t^+ \, \P\big(\big|Y(0) / n - p\big| \ge \varepsilon\big) \nonumber \\
        &= o(1) + t^+ \, \P\big(\big|Y(0)  - pn\big| \ge \varepsilon n \big).
\end{align}
To show that \eqref{eq:union-bound-subexp} vanishes, we use Cram\'er's theorem, viewing $Y(0)$ as a sum of independent $\Ber(p)$ random variables \cite{cramer2018new}. For $a \in (0,1)$ let
$$
    D(a \, \| \, p) := a \, \ln \frac{a}{p}+(1-a) \ln \frac{1-a}{1-p},
$$
which is positive away from $p$. Then we have, for $0 < \varepsilon <  \min(p, 1 - p)$, 
\begin{equation}\label{eq:cramer-pos}
    \P\big(Y(0)  \ge (p + \varepsilon)n\big) = e^{-n\big(D(p + \varepsilon \, \| \, p) + o(1)\big)}  = e^{-\Omega(n)}
\end{equation}
and 
\begin{equation}\label{eq:cramer-neg}
    \P\big(Y(0) \le (p - \varepsilon)n\big) = e^{-n\big(D(p - \varepsilon \, \| \, p) + o(1)\big)}  = e^{-\Omega(n)}.
\end{equation}
Indeed then, \eqref{eq:union-bound-subexp} vanishes since $t^+ = e^{o(n)}$. Therefore, $S_n /n \xrightarrow{\P} p$.

Now suppose that there exists $c>0$ such that $r_n \gg e^{cn}$. By \eqref{eq:cramer-pos}, we may choose $\varepsilon > 0$ small enough so that $\P\big(Y(0) \ge (p + \varepsilon)n\big) \ge e^{-cn/4}$ for all large $n$. We will show that 
\begin{equation}\label{eq:non-meanfield}
    Y(\mathfrak{j}) \ge \lfloor (p + \varepsilon)n \rfloor \quad \text{w.h.p.,}
\end{equation}
which establishes the second half of the theorem.

 Let $\tau_0 := t((p + \varepsilon)n)$, so $B(\tau_0) = \lfloor (p + \varepsilon)n \rfloor$. We will show w.h.p.\ that a boundary exceedance occurs before any additional visits the sink. For $k \ge 1$, let $\tau_{k}$ be the first time after $\tau_{k-1}$ that all $n$ of the coordinates of $Y(t)$ are resampled. It follows from the definition of $Y(t)$ that the $Y(\tau_k)$ are independent. Let $K$ be the smallest index $k$ such that $Y(\tau_k) \geq (p+\varepsilon)n$, which is a boundary exceedance as $B(t)$ is non-increasing. Thus, $\mathfrak{j} \le \tau_K$, and so 
 \begin{equation}\label{eq:B-tau-k}
     Y(\mathfrak{j}) = B(\mathfrak{j}) \ge B(\tau_K).
 \end{equation}
 By our choice of $\varepsilon$ and independence of the $Y(\tau_k)$, $K$ is dominated by a $\Geo(e^{-cn/4})$-distributed random variable. Hence, $K \leq e^{cn/3}$ w.h.p.\ which implies $\tau_K \le \tau_{\lfloor e^{cn/3} \rfloor}$.
Moreover, since $\tau_{k} - \tau_{k-1}$ is the classical coupon collection time on $n$ coupons (plus some rare additional lazy steps where no coupons are collected), and $\tau_{\lfloor e^{cn/3} \rfloor} - \tau_0$ is the sum of $\lfloor e^{cn/3} \rfloor$ independent collection times, we have $\tau_{\lfloor e^{cn/3} \rfloor} - \tau_0$ concentrates around $e^{cn/3}(n \log n)/(1 - q_n) \sim e^{cn/3}(n \log n)$ and hence is no larger than $e^{cn/2}$ w.h.p. 
For large $n$, it takes at least a $\Geo(e^{-cn})$-distributed number of jumps between each particle accumulation at the sink. Thus, w.h.p.\ no additional particles have moved to the sink on the time interval $[\tau_0, \tau_0 + e^{cn/2}]$, nor then on the interval $[\tau_0, \tau_K]$ w.h.p., in which case $B(\tau_K) = \lfloor (p + \varepsilon)n \rfloor$. Along with \eqref{eq:B-tau-k} this establishes \eqref{eq:non-meanfield}, finishing the proof.
\qed 

\begin{remark}\thlabel{rem:exponential}
    If $r_n = e^{(a + o(1))n}$ for some $0 < a <  \ln (1/p)$, by refining the above argument one could show that $S_n / n \xrightarrow{\P} p + \beta$ where $\beta > 0$ is the unique value so that $D(p + \beta \, \| \, p) = a$. And if $a > \ln (1 / p)$ one would get $S_n = n$ w.h.p.
\end{remark}

\section{Proof sketch of \thref{thm:1.2}} \label{sec:1.2}
  Suppose that $r_n = n + 1 =1/q_n $. 
  As above, let $t(z)$ be the first time step that $B(t) \le z$ for $z \leq n$. Define $M_{z,y} := \max_{t(z) \leq t < t(y)} Y(t)$. Recalling that $\alpha_n = \sqrt{p(1-p)n  \log n },$ fix $0< \epsilon < 1$ and define the decreasing thresholds
\begin{alignat*}{2}
    k_1 &:= pn + 2\alpha_n ,          &\qquad k_2 &:= pn + (1+\epsilon)\alpha_n , \\
    k_3 &:= pn - (1-\textstyle{\frac\epsilon2})\alpha_n, &\qquad k_4 &:= pn - (1-\epsilon)\alpha_n .
\end{alignat*}
It suffices to prove w.h.p.\ that
\begin{align}
    M_{n, k_1} &< k_1 \label{eq:jarai-goal1}, \\
    M_{k_1, k_2} &< k_2 \label{eq:jarai-goal2}, \quad \text{and} \\
    M_{k_3, k_4} &\ge k_3 \label{eq:jarai-goal3}.
\end{align}
\eqref{eq:jarai-goal1} and \eqref{eq:jarai-goal2} ensure that $\mathfrak{j} \ge t(k_2)$ w.h.p.\ while \eqref{eq:jarai-goal3} ensures that $\mathfrak{j} < t(k_4)$ w.h.p. Therefore, $k_2 \ge Y(\mathfrak{j}) > k_4$ w.h.p., which proves the theorem. 

All three statements follow straightforwardly from \thref{lem:max-tail} (the lemma is stated for a continuous-time version of $\big(Y(t), Z(t)\big)$, which is no more complicated than the discrete-time version). \thref{lem:max-tail} is stated for deterministic, not random, times, but as in the proof of \eqref{eq:thm-main-goal}, we may exploit concentration estimates on $t(k_1)$, $t(k_2) - t(k_1)$, and $t(k_4) - t(k_3)$, which in this case are independent sums of many exponential random variables \cite{janson2018tail}.
        \qed 

\begin{remark}\thlabel{rem:jarai}
    ARW on $\bar{K}_{n+1}$ with sink-strength $1/(n+1)$, so that particles perform symmetric walks, corresponds to the ARW setting from \cite{jarai2024critical} on $K_{n+1}$ with a slightly increased sleep probability. Specifically, discarding the rate-$(1/(n+1))$ self-loops, which are lazy steps regarding the configuration, the sleep probability on $K_{n+1}$ is given by $\frac{\lambda}{\lambda + 1 - 1/(n+1)} = p + O(1/n)$. The $O(1/n)$ error is absorbed into $\varepsilon \alpha_n$, thus implying \eqref{eq:1.2}. However, some minor care would be needed to prove \thref{thm:1.2} for non-fixed $p$ (bounded away from $0$ and $1$). Namely, \thref{lem:max-tail} would need to be stated and proved in this more general setting, which should add no difficulty outside of additional bookkeeping. 
\end{remark}

\section{Proof of \thref{thm:density}} \label{sec:density}
Start with a configuration with $m_n$ particles added uniformly to $n$. Topple each site of $[n]$ until all particles are asleep or in purgatory. 
Let $Y(0) \in [0, m_n]$ denote the number of particles asleep on $[n]$. A straightforward adaptation of \thref{prop:stop} gives that we may run the binomial update process dynamics starting from $Y(0)$ with $q_n = 0$, and $\Jumps$ is the first time such that $Y(t) = m_n$. This is because each binomial update step corresponds to a single jump from purgatory, and the jumps from purgatory are in bijection with the jumps in the original process.

Notice that $Y(t)$ is a skip-free lazy random walk with \emph{drift}
\begin{align} \label{eq:Y}
\E[Y(t+1) - Y(t)\mid Y(t) = y] &= (-1) \f{y}{n}(1-p)  + (1) \f{n-y}{n} p = p - \f y n.
\end{align}
The probability of a lazy step is $$\frac{y}{n}p + \frac{n-y}{n}(1-p) \leq p_* := \max(p,1-p).$$

Suppose that $\mu = p-\epsilon$ for some $\epsilon >0$ so that $m_n = \lceil(p - \epsilon)n\rceil$. At worst case, the walk begins at $Y(0) = 0$. When $y \leq \lceil (p-\epsilon)n \rceil$, by \eqref{eq:Y} the drift is at least $\epsilon$. Thus, there is a coupling where $\Jumps$ is at most the first passage time from $0$ to $m_n$ of a $p_*$-lazy walk where non-lazy steps have drift $\epsilon$, which concentrates on $(1 - p_*)\epsilon^{-1} pn$. Thus, $\Jumps \le 2(1 - p_*)\epsilon^{-1}pn$ w.h.p.

Now suppose that $\mu = p+\epsilon$ for some $\epsilon >0$ so that $m_n = \lceil(p+\epsilon)n\rceil$. We claim that $Y(t)$ is stochastically dominated by the $\Bin(n,p)$ distribution for all $t \ge 0$. Indeed, if we let $n_t \in [0,n]$ denote the number of sites visited at any point by time $t$, then 
\begin{equation}\label{eq:cond-on-coupons}
    Y(t) \mid n_t \sim \Bin(n_t,p).
\end{equation}
By Hoeffding's inequality then, $\P(Y(t) \ge m_n) \le e^{-2\varepsilon^2 n}$ for any $t \ge 0$. A union bound yields
\begin{align*}
    \P(\Jumps < e^{\varepsilon ^2 n }) = \P\bigg(\bigcup_{0}^{e^{\varepsilon ^2 n } - 1}\{Y(t) \ge m_n\}\bigg) \le e^{-\epsilon^2 n}.
\end{align*}

Finally, suppose that $\mu = p$ so that $m_n = \lceil pn\rceil $. Note that $n_t$, the number of visited sites at time $t$, begins with $n_0 \in [0, m_n]$ and from there increases as the coupon collector problem on $n$ coupons. Fix $0<\delta<1/6$ and let $t' := \lceil (\f 12 - 3\delta)n \log n\rceil $. 
Let $E'$ be the event that $n_{t'} \le n - n^{\f 12 + 2 \delta}$, which occurs w.h.p.\ using that $n_0 \le \lceil pn\rceil$ begins with a positive fraction of unvisited sites. On $E'$, $n_{t} \le n - n^{\f 12 + 2 \delta}$ holds for all $0 \le t \le t'$. By \eqref{eq:cond-on-coupons} and moderate deviation estimates 
\cite[Theorem 1]{cramer2018new} for the binomial distribution then, we have for $0 \le t \le t'$ that 
$$
    \P(Y(t) \ge m_n \mid E') = O(e^{-n^{4 \delta}/2}).
$$
By a union bound then, we have 
$$
    \P(\Jumps < t' \mid E') \le t'O(e^{-n^{4 \delta}/2}) = o(1).
$$
Thus $\Jumps \ge t'$ w.h.p.\ recalling that $\P(E') \to 1$.

Let $t'' := \lceil (\f 12 + 2\delta)n \log n\rceil $, 
and $t_0 =\lceil (\f 12 + \delta)n \log n\rceil$.
Then $n_{t_0} \ge n - \lceil n^{1/2}\rceil$ w.h.p., in which case 
$Y(t_0)$ dominates a Binomial$(n-\lceil n^{1/2} \rceil, p)$ random variable. Chebyshev's inequality implies that $Y(t_0) \geq pn - \sqrt n \log \log n$ w.h.p.\ The number of steps, after $t_0$, that it takes  for $Y(t)$ to first reach $m_n$ is dominated by the time it takes for a $p_*$-lazy simple random walk started at $0$ to exceed $\sqrt n \log \log n$. This is because, by \eqref{eq:Y}, there is always a slight upward drift when $Y(t) < pn$. The reflection principle for the running maximum of a simple random walk implies that w.h.p.\ during the next $t'' - t_0 \sim \delta n \log n$ steps the walk will at some time exceed $\sqrt n \log \log n$. Thus, $\Jumps \leq t''$ w.h.p.
\qed

\begin{remark}\thlabel{rem:activity}
    Let $\Jumps(x)$ be the total number of jumps that occur at site $x \in [n]$. We claim that $\Jumps(x) \geq \f 14 \log n$ w.h.p.\ when $\mu = p$. 
    Letting $U_n(x, t)$ denote the number of updates to coordinate $x$ by time $t \ge 0$ in the binomial update process, we have $U_n(x, t) \sim \Bin(t, 1/n)$. And provided $t \le \Jumps$, $U(x, t)$ is at most one more than $\Jumps(x)$ in our coupling, as it counts the number of jumps to $x$ that have occurred. The statement follows as $\Jumps \ge \tfrac13 n \log n$ w.h.p.\ by \thref{thm:density}. 
    
    Similar reasoning implies in the $\mu < p$ setting that $\Jumps(x)$ is bounded w.h.p.\ by any sequence $\omega_n \to \infty$.
\end{remark}

\section{Setup for the proof of \thref{thm:gumby}} \label{sec:setup}

In this section we introduce additional notation that we use to restate claim \eqref{eq:in-distr} from  \thref{thm:gumby}. Next, we introduce a continuous-time binomial update process and state the analogue of \thref{prop:stop}. We then discuss the running maximum of the scaled update process and its connection to the Ornstein-Uhlenbeck process, as well as state a lemma regarding the running maximum. Lastly, we provide an informal approximation that explains the appearance of a Gumbel scaling limit which will be made rigorous in the next section.  

\subsection{Restatement}
Recall that $a_n = \sigma\sqrt{n}$ where $$\text{$\sigma = \sqrt{p(1-p)}$, \quad  $f_n = \sqrt{2\log\big(r_n / \sqrt{n}\big)}$, \quad $G_n = \frac{S_n - pn - a_n f_n}{a_n / f_n}  - b$},$$ and $b = \log(\sigma / \sqrt{2\pi})$. Define the normalization
$$
    s_n := \frac{S_n - pn}{a_n} = f_n + (G_n + b)/f_n.
$$
For $x_0 \in \R$, we have $G_n \le x_0$ if and only if $s_n \le y_0$ for 
\begin{equation}\label{eq:y0}
    y_0 := f_n + (x_0 + b)/f_n.
\end{equation}
Therefore, convergence in distribution is equivalent to the statement that 
	\begin{equation}\label{eq:thm-main-goal}
		\P(s_n \le y_0) \to e^{-e^{-x_0}} \quad \text{for all } x_0 \in \R.
	\end{equation}

\subsection{Continuous-time binomial update process}

We will use a \emph{continuous-time binomial update process} $S(t)$ for $t \in [0,\infty)$ to prove \eqref{eq:thm-main-goal}. In the continuous-time process, each coordinate independently updates according to a rate-$1$ Poisson process and the sink particle count $X(t)$ updates independently at rate
$$q'_n := \frac{nq_n}{1 - q_n} \sim nq_n,$$
with the asymptotic equivalence with $nq_n$ coming from assumption \eqref{eq:regime}. This makes it so $X(t)$ updates with probability $q_n'/(n + q'_n) = q_n$.
For $t \ge 0$, we let $\tau(t) \approx t + \log n$ be the next \emph{regeneration time}, i.e., the first time such that each coordinate has been updated at some time on $(t, \tau(t)]$, as in the coupon collector problem.
Note that $s(t) = (S(t) - pn)/a_n$ is a stationary, reversible process with $s(0) \xrightarrow{d} \mathcal{N} \sim \mathcal{N}(0,1)$ as $n \to \infty$. In fact, $s(t)$ scales to a standard Ornstein-Uhlenbeck (OU) process, i.e.\ the process $x_t$ satisfying $dx_t = -x_t\, dt + \sqrt{2}\, dW_t$ \cite{karlin1965ehrenfest}.

Denote the boundary by $B'(t) := n - X(t)$, and let $\mathfrak t$ be minimal so that $S(\mathfrak t) = B'(\mathfrak t)$. We restate \thref{prop:stop} as
$$S_n \overset{d}{=} S(\mathfrak t) = B'(\mathfrak t).$$ 
Denote the rescaled boundary by 
$$b(t) := \f{B'(t) - pn}{a_n}.$$ 
Note then that $\mathfrak t$ may be defined as the first time that $s(t) = b(t)$ and equivalently that $s(t) \ge b(t)$. As with the discrete-time process, we call any time $t$ where $s(t) \ge b(t)$ a boundary exceedance time. 

\subsection{The running maximum} 
For $t' \leq t$ we define $$m(t',t) := \max_{t' \leq r \leq t} s(r)$$ to be the running maximum on the interval $[t',t]$ with the convention that $m(t) = m(0,t)$. 
We will break time into short enough intervals where $b(t)$ is essentially flat, so that the probability of a boundary exceedance during a length-$T$ interval is well-approximated by the probability $m(T)$ exceeds some constant.

In the OU process, exceedances above a height $x \gg 0$ occur like Poisson point processes, where once the height $x$ is reached, it will spend an average of $\sim 1 / x^2$ time above before mixing occurs, after which there is another long exponential waiting time before the next exceedance \cite{berman1982sojourns}. By stationarity, the proportion of time above $x$ is given by 
$$
    \Tail(x) := \P(\mathcal{N} \ge x) \sim \frac{\exp(-x^2/2)}{\sqrt{2 \pi}x},
$$
with the asymptotic notation denoting that $x \to \infty$. Exceedances then come in at approximately rate $x^2 \bar{F}(x)$. So we let
$$
    \mu_x := \frac{x\exp(-x^2/2)}{\sqrt{2 \pi}} \sim x^2\Tail(x),
$$
and record here that $\mu_x$ is decreasing for $x \in [1, \infty)$. A corresponding story holds for $s(t)$ if $x \ll n^{1/6}$, where the tails of $s(t)$ and $\mathcal{N}$ are close in ratio. Specifically, we prove in Appendix~\ref{sec:m} the following tail bound for the running maximum.

\begin{lemma}\thlabel{lem:max-tail}
    For $x=x_n$ and $T=T_n$ satisfying  $x = \Omega(\sqrt{\log n})$, $x = o(n^{1/6})$, and $\log n \ll T \ll 1/\mu_x$, 
    \begin{equation*}
        \P(m(T) \ge x) \sim \mu_x \,T.
    \end{equation*}
\end{lemma}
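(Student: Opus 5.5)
We prove matching lower and upper bounds for $\P(m(T)\ge x)$ by splitting $[0,T]$ into blocks of length $\ell$ with $\log n \ll \ell \ll T$. On each block, an exceedance of level $x$ is detected through one "entrance" event. Write $N$ for the number of entrances, meaning upcrossings of the lattice level $\lceil pn + a_n x\rceil$ by $S(t)$ during $[0,T]$. Then $\{m(T)\ge x\}\subseteq \{s(0)\ge x\}\cup\{N\ge 1\}$. This gives the upper bound
$$\P(m(T)\ge x)\le \Tail(x)+\E N ,$$
and since $\Tail(x)\sim \mu_x/x^2 \ll \mu_x T$, only $\E N$ matters.

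\emph{Computing $\E N$.} Stationarity and Poisson clocks give the rate formula: $\E N/T$ equals the stationary probability that $S=k-1$, with $k:=\lceil pn+a_nx\rceil$, times the up-jump rate $(n-k+1)p$. The binomial point probability is evaluated by a local CLT with moderate-deviation correction, which is valid exactly because $x=o(n^{1/6})$, so the cubic correction $x^3/\sqrt n\to0$. This gives
$$\P(S=k-1)\sim \frac{e^{-x^2/2}}{\sqrt{2\pi}\,a_n}.$$
The up-jump rate is $(n-k+1)p \sim n(1-p)p=a_n^2$. However, a raw upcrossing count overcounts. After hitting $k$, the walk typically dips and re-crosses $\Theta(\sqrt n/x)$ times; indeed $\E N$ would be of order $a_n e^{-x^2/2}T$, which is too large.

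So we instead count \emph{clump starts}: upcrossings at time $t$ with no visit to level $\ge k$ during $[t-\ell,t)$. The probability that an upcrossing is a clump start is, by reversibility, the probability that the reversed walk started from $k$ stays below $k$ for time $\ell$. Near level $k$ the walk has downward drift $\approx -x a_n$ per unit time (from the drift formula \eqref{eq:Y} in continuous time, $p-y/n$ per update at rate $n$) and jump rate $\approx 2a_n^2$. The gambler's-ruin / ladder-height computation for a skip-free walk with drift $-d$ and variance rate $v$ gives escape probability $\sim d/v$ per step scale; here this is $x a_n/a_n^2 = x/a_n$. Multiplying through,
$$\E N_{\text{clump}}\sim T\cdot a_n^2\cdot \frac{e^{-x^2/2}}{\sqrt{2\pi} a_n}\cdot \frac{x}{a_n}=\mu_x T.$$
The main obstacle is making this escape probability rigorous uniformly. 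I would couple the walk on the window $[k-\sqrt n\,x^{-1}\omega_n,\,k]$ with a homogeneous lazy walk with constant drift, since the drift varies by only $O(\omega_n/(x\sqrt n))$ relative there. I would then use the exact ruin probability for homogeneous skip-free walks, and control the event of leaving the window downward within time $\ell$ by the OU-type mixing, i.e.\ regeneration by time $\log n\ll \ell$.

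\emph{Upper bound.} Any $m(T)\ge x$ produces a clump start in $[0,T]$ unless the first visit comes within $[0,\ell)$ via an earlier exceedance. That case has probability at most $\Tail(x)+\mu_x\ell=o(\mu_xT)$.

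\emph{Lower bound.} Use Bonferroni on clump starts, $\P(N_c\ge1)\ge \E N_c-\E[N_c(N_c-1)]/2$. For pairs separated by more than the regeneration time $\tau$, with $\tau\approx\log n\ll \ell$, approximate independence gives a contribution $O((\mu_xT)^2)=o(\mu_xT)$ because $T\ll 1/\mu_x$. Pairs within distance $\ell$ of each other are impossible by the definition of a clump start. Pairs at distance between $\ell$ and $\log n+\omega$ are excluded when $\ell$ exceeds that, so we choose $\ell=\omega_n\log n$. For the near-independent pairs, condition on the state at the first clump start. Starting from height $\approx x$, the probability of returning above $x$ after time $\ell$ without the configuration having regenerated is bounded using $\tau(t)$. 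After regeneration, the state is a fresh $\Bin(n,p)$, which yields the product bound. Together these give $\P(m(T)\ge x)\ge(1-o(1))\mu_xT$, completing the asymptotic.
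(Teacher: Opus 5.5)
Your route is genuinely different from the paper's, and your heuristic is right, but as written it is circular at the boundary estimates.

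\textbf{Comparison with the paper.} The paper never counts crossings. It works with the occupation time $\ExcTime(T)$ above $x$, for which $\E\,\ExcTime(T)=T\,\P(s(0)\ge x)$ holds exactly by stationarity. It then shows by Berry--Esseen, applied to the law of $S(t)$ given $S(0)$ (a sum of independent Bernoullis), that one clump started at $x'$ carries occupation $\sim x^{-2}$. The two inequalities come from conditioning on the first hitting time and from regenerating after it. The only a priori input, $\P(m(T)\ge x)=o(1)$, comes from the half proved first.

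You instead use the ergodic-exit form (crossing rate times escape probability) via Palm calculus, reversibility and gambler's ruin. The product $a_ne^{-x^2/2}/\sqrt{2\pi}\cdot x/a_n$ is indeed $\mu_x$. Two small corrections:
\begin{itemize}
\item The escape probability of a skip-free walk is $(\beta-\alpha)/\beta$, with $\beta\approx a_n^2$ the down-rate. That is $2d/v$ with $v\approx 2a_n^2$, not $d/v$; your final value $x/a_n$ is still correct.
\item Over your window $\{k-1-w,\dots,k-1\}$, $w\approx\sqrt n\,\omega_n/x$, the drift varies by a relative $O(\omega_n/x^2)$, not $O(\omega_n/(x\sqrt n))$. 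So you need $\omega_n\ll x^2$.
\end{itemize}

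\textbf{The gap.} The argument is circular wherever a clump interacts with the lookback window.
\begin{itemize}
\item[(i)] In the upper bound, the bad case is a first visit $\tau_x<\ell$ preceded by an exceedance in $[\tau_x-\ell,0)$. It has probability at most $\P(m(\ell)\ge x)$. Bounding that by $\Tail(x)+\mu_x\ell$ is the lemma itself at horizon $\ell$, and rerunning your argument at horizon $\ell$ just reproduces the same boundary term.
\item[(ii)] The lower bound $\P_{k-1}(S\le k-1\text{ on }[0,\ell])\gtrsim x/a_n$ needs the walk, once it leaves the window downward, not to return to $k$ within time $\ell$.
\item[(iii)] In $\E[N_c(N_c-1)]$, a clump start at $t\in[s+\ell,\tau(s)+\ell)$ has its lookback straddling $\tau(s)$. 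It is therefore the first visit of the regenerated process and contributes $\P(m(\ell)\ge x)$, not a product term.
\end{itemize}
All three need an a priori bound on $\P(m(\ell)\ge x)$. ``Regeneration by time $\log n$'' does not supply one: after regeneration you have a fresh stationary copy, whose chance of reaching $x$ in the remaining time is exactly the unknown. Crude bounds also fail, since the raw crossing count $\ell a_ne^{-x^2/2}$ is large when $x^2<\log n$, which $x=\Omega(\sqrt{\log n})$ allows.

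\textbf{A repair within your framework.} Look back to time $0$ instead of $\ell$. The same reversibility computation gives the exact identity
\[
\P(m(t)\ge x)=\P(s(0)\ge x)+\P(S(0)=k-1)\,(n-k+1)p\int_0^t h(u)\,du,\qquad h(u):=\P_{k-1}\big(S\le k-1\text{ on }[0,u]\big).
\]
Let $\eta$ be the probability of exiting the window at the bottom before hitting $k$, and $\zeta$ the exit time. Then $h(u)\le\eta+\P_{k-1}(\zeta>u)$, and a Wald argument gives $\E_{k-1}\zeta=O(\omega_n/(xa_n))$. Together these yield the non-circular bound $\P(m(t)\ge x)\le(1+O(\omega_n))\Tail(x)+(1+o(1))\mu_x t$, using only local estimates.

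For the lower bound, $h(u)\ge\eta\,\big(1-\P_{k-2-w}(\text{hit }k\text{ by time }u)\big)$. Before regeneration, monotone coupling gives
\[
\P_{k-2-w}(m(2\log n)\ge x)\le \frac{\P(m(2\log n)\ge x)}{\P(S(0)\in[k-2-w,k-1])}=O\big((\omega_n+x^2\log n)e^{-c\omega_n}\big).
\]
After regeneration, the stationary upper bound just proved applies. Choosing $\log x\ll\omega_n\ll x^2$ closes the argument and makes the Bonferroni step unnecessary.
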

Given the connection between the binomial update process and the Ehrenfest urn model, this seems like it should be a known result, but we did not find a reference.

\subsection{Heuristic sketch of \eqref{eq:thm-main-goal}}

We perform a back-of-the-envelope justification of \eqref{eq:thm-main-goal} before carrying out its rigorous proof in the next section. For $y \in \R$, let 
$$t_y:= \inf\{ t \ge 0 \colon b(t) \le y\}$$
be the hitting time of level $y$ for the scaled boundary.
Then $s_n \le y$ if and only if there are no boundary exceedances before time $t_y$. Moreover, as we argue in the proof, by \thref{lem:max-tail} it is unlikely to have a boundary exceedance before time $t_{\bar{y}}$ for $\bar{y} := \sqrt{10 \log (n r_n)}$.

Take $y \gg 1$, so that (clustered) boundary exceedances before time $t_y$ behave like a Poisson point process with increasing intensity. Then if $y \le \bar{y}$ we should get that 
\begin{equation}\label{eq:approx-prob}
    \P(s_n \le y) \approx e^{-\lambda_y}
\end{equation}
where $\lambda_y  = \int_{t_{\bar{y}}}^{t_y}\mu_{b(t)} \, dt$. Approximating $b(t)$ by a straight line with slope $-q'_n/a_n$ then, 
\begin{align}\label{eq:approx-lambd}
    \lambda_y  \approx \int_{0}^{t_y - t_{\bar{y}}}\mu_{y +  q'_n / a_n t} \, dt \approx \frac{a_n}{\sqrt{2\pi} \, q'_n}(e^{-y^2/2} - e^{-\bar{y}^2/2}) \approx \frac{\sigma r_ne^{-y^2/2}}{\sqrt{2\pi n}}.
\end{align}

Combining Equations \eqref{eq:approx-prob} and \eqref{eq:approx-lambd}, and plugging in $y_0$ from \eqref{eq:y0} we see that $\P(s_n \leq y_0) \approx e^{-e^{-x_0}}$.

\section{Proof of \thref{thm:gumby}} \label{sec:gumby}

Our main task is proving \eqref{eq:in-distr}, convergence in distribution, which is equivalent to the restatement \eqref{eq:thm-main-goal}, that $\P(s_n \le y_0) \to e^{-e^{-x_0}}$ for all $x_0 \in \R$. We do so in Sections~\ref{sec:Poissonization}--\ref{sec:LB}. In Section~\ref{sec:variance} we make technical refinements to deduce the claimed convergence of $\var(G_n)$ in \eqref{eq:var}. 

For convenience, we recall a few key terms from Section~\ref{sec:setup}.
We have $s_n = (S_n -pn)/a_n$ with $a_n = \sigma\sqrt{n}$ and $\sigma^2 = p(1-p)$. The process $s(t) = (S(t) - pn) / a_n$ is the normalized continuous-time binomial update process and $b(t) = (B(t) - pn)/a_n$ is the rescaled continuous-time boundary process. And the maximum over an interval is given by $m(t',t) = \max_{t' \leq r \leq t} s(r)$ with $m(t) = m(0,t)$. Given $y \in \mathbb R$, the stopping time $t_y := \inf\{t \colon b(t) \leq y\}$ is the first time $b(t)$ reaches level $y$.  Some important quantities and their asymptotic rates are:
	\begin{align}\label{eq:fn}
		f_n &= \sqrt{2\log\big(r_n / \sqrt{n}\big)}= o(n^{1/6}),\\
        f_n &= \Omega(\sqrt{\log n}), \label{eq:fn-lb}\\
		\mu_x &= \frac{x\exp(-x^2/2)}{\sqrt{2\pi}}, \nonumber \\
		q'_n &= \frac{nq_n}{1 + q_n} \sim nq_n. 
	\end{align}
    We take $x_0 \in \R$, but with an eye towards controlling the variance let it depend on $n$, provided it stays in the following range:
    \begin{equation}\label{eq:xn-range}
        -2\log \log n^3 \le x_0 \le 3 \log n.
    \end{equation}
    Then, 
    \begin{equation}\label{eq:y0}
       y_0 = f_n + (x_0 + b)/f_n = \Theta(f_n) 
	\end{equation}
    holds uniformly.

\subsection{Poissonization}\label{sec:Poissonization}

	Let  $$y_k := y_0 + \frac{k}{f_n \log^3 \! n}$$
	for $k \ge 0$, and choose $N$ minimal so that $y_N \ge \max(\bar{y}, 2y_0)$ for $\bar{y} := \sqrt{10 \log (n r_n)}$. Then
    \begin{equation}\label{eq:yN}
        y_N = \Theta(f_n),
    \end{equation}
    and, as $y_N \ge \bar{y}$, 
    \begin{equation}\label{eq:mu-yN}
		\mu_{y_N} = o\bigg(\frac{\sqrt{\log (nr_n)}}{(nr_n)^5}\bigg).
	\end{equation}
    We assume $n$ is large enough that $y_0 \ge 1$, and thus $\mu_{y_k}$ is decreasing in $k$. 
    
    Let $t_k = t_{y_k}$ for $0 \le k \le N$, which are are decreasing times in $k$. Set $t_{N+1} = 0$. Let $J_k := [t_{k+1}, t_k]$ and $T_k := t_{k+1} - t_k$ for $0 \le k \le N$. We scale by $f_n \log^3 \! n$ to define the $y_k$ in order to make $T_k$ much larger than the regeneration time but small enough that $\mu_{b(t)}$ remains flat over $J_k$.

	For $0 \le k \le N$, let $X_k$ be the indicator variable that there is a boundary exceedance on interval $J_k$, with $p_k := \E X_k$. Note that $s_n \le y_0$ if and only if $X := \sum_{k=0}^N X_k = 0$. Let $\nu := \E X = \sum_{k=0}^N p_k$.
	
	The variables $X_k$ are not independent. However, we approximate them from below by indicators $X'_k \le X_k$ that express that on $J_k$, first every coordinate is updated, and then there is a boundary exceedance. In other words, $X'_k$ indicates a boundary exceedance on $[\tau(t_{k+1}), t_k]$, which presupposes that $\tau(t_{k+1}) \le t_k$. To see the $X'_k$ are independent, it is easiest to check $X'_{k + 1}$ is independent of $(X'_{k}, \ldots X'_{0})$. This is because $(X'_{k}, \ldots X'_{0})$ is measurable with respect to 
    $\big(s(t), b(t)\big)_{t \ge \tau(t_{k + 1})}$,
    which is independent of $\big(s(t), b(t)\big)_{t_{k+2} \le t \le t_{k+1}}$, which $X'_{k+1}$ is measurable with respect to. We define the analogous quantities $p'_k := \E[X'_k]$, $X' = \sum_{k = 0}^N X'_k$, and $\nu' = \E[X']$. 

	We will show that 
	\begin{equation}\label{eq:lambd}
		e^{-x_0} \lesssim \nu' \le \nu \lesssim e^{-x_0},
	\end{equation}
    uniformly over \eqref{eq:xn-range}. Note that the minimum of $e^{-x_0}$ on this range is $1/n^3$, so any smaller error may be absorbed. We also show
	\begin{equation}\label{eq:pk-small}
		\max_{0 \le k \le N}p_k \to 0
	\end{equation}
    uniformly, which implies then the same fact about $p'_k \le p_k$.

    We explain now why \eqref{eq:lambd} and \eqref{eq:pk-small} imply \eqref{eq:thm-main-goal} and therefore \eqref{eq:in-distr}. We consider fixed $x_0 \in \R$, and so $\nu',\nu \to e^{-x_0}$ by \eqref{eq:lambd}. Then, since $X - X'$ is a non-negative integer, by Markov's inequality we have 
    \begin{align*}\label{eq:X-X'-close}
        \P(X \neq X') = \P(X - X' \ge 1) \le \E[X - X'] = \nu - \nu' \to 0.
    \end{align*}
    And since the $X'_k$ are independent, \eqref{eq:lambd} and \eqref{eq:pk-small} give that $X' \xrightarrow{d} \Pois(e^{-x_0})$, and thus $X \xrightarrow{d} \Pois(e^{-x_0})$. Therefore, 
	$$
		\P(s_n \le y_0) = \P(X = 0) \to \Pois(e^{-x_0})(\{0\}) = e^{-e^{-x_0}}.
	$$
	Thus, to establish \eqref{eq:in-distr} it remains to show \eqref{eq:lambd} and \eqref{eq:pk-small}.

\subsection{Upper bounds}
    
	We obtain upper bounds on $p_k$ and $\nu$ using \thref{lem:max-tail} and some approximations. Let $T_k$ be the length of interval $J_k$, which is distributed as the sum of independent $\Exp(q'_n)$ random variables, precisely $a_n / (f_n \log^3 \! n)$ of them rounded to an integer, which diverges since $a_n/(f_n \log^3 \! n) = \Omega(n^{1/4})$ by assumption \eqref{eq:regime}. (This also gives that $N = o(n)$). Thus, $T_k$ concentrates at 
	\begin{equation}\label{eq:T}
		T := \frac{1}{q'_n} \cdot \frac{a_n}{f_n \log^3\!n} \sim \frac{\sigma r_n}{\sqrt{n}f_n \log^3 \! n}.
	\end{equation}
	Specifically, if we take $T_\delta := T / \log n$, $T^+ := T + T_\delta$, and $T^- := T - T_\delta$, it is easily shown that
	\begin{equation}\label{eq:Ek}
		\P(T_k \notin (T^-, T^+)) = o(1/n^4)
	\end{equation}
 by standard concentration estimates for a sum of independent $\Exp(q'_n)$ random variables \cite{janson2018tail}. We note here that
	\begin{equation}\label{eq:coupon}
		T_\delta \gtrsim n^{\varepsilon / 2},
	\end{equation}
	a much larger scale than the $O(\log n)$ regeneration time.

	We now establish that
	\begin{equation}\label{eq:pk-ub}
		p_k \lesssim T\mu_{y_k} + o(1/n^4)
	\end{equation}
	for $0 \le k < N$. Note that if $X_k = 1$ then either $T_k \ge T^+$, accounting for the $o(1/n^4)$ term, or $m_k := m(t_{k+1}, t_{k+1} + T^+) \ge y_k$. And by stationarity, $m_k \overset{d}{=} m(T^+)$, so it remains to show 
    \begin{equation}\label{eq:exceedance-fixed-time}
        \P(m(T^+) \ge y_k) \sim T\mu_{y_k}.
    \end{equation}
    This follows from \thref{lem:max-tail} once the conditions of the lemma are verified. 
    
    The conditions on $y_k$ are met as $y_0 \le y_k \le y_N$, using \eqref{eq:fn}--\eqref{eq:fn-lb} and \eqref{eq:y0}--\eqref{eq:yN}. We also have $T^+ \gg T_\delta \gg \log n$ by \eqref{eq:coupon}. Finally, to see $T^+ \ll 1 / \mu_{y_k}$, we have $T^+ \mu_{y_k} \le T^+ \mu_{y_0} \sim T \mu_{y_0}$, and,
	\begin{align}\label{eq:T-mu-y0}
        T\mu_{y_0} = 
		T \, \frac{y_0}{\sqrt{2\pi}}\exp(-y_0^2/2) 
            &= T \, \frac{y_0}{\sqrt{2\pi}}\exp(-f_n^2/2 - (x_0 + \log(\sigma/\sqrt{2\pi})) + o(1)) \nonumber \\
			&\sim T \, \frac{y_0}{\sqrt{2\pi}} \frac{\sqrt{n}}{r_n}e^{-x_0} \frac{\sqrt{2\pi}}{\sigma}\nonumber \\
			&\sim \frac{y_0 \, e^{-x_0}}{f_n \log^3 
            \! n},
	\end{align}
    using \eqref{eq:T} for the last line. This is indeed $o(1)$ as $y_0 = \Theta(f_n)$ and $e^{-x_0} = O(\log^2 n)$, establishing \eqref{eq:exceedance-fixed-time} and thus \eqref{eq:pk-ub}. Moreover, $p_k \to 0$ uniformly over $0 \le k < N$ again by \eqref{eq:pk-ub} and \eqref{eq:T-mu-y0}. Note also that $T \mu_{y_0} = o(e^{-x_0})$ by \eqref{eq:T-mu-y0}, so combined with \eqref{eq:pk-ub} we have
    \begin{equation}\label{eq:mu-y0-e-x0}
        p_0 = o(e^{-x_0}).
    \end{equation}
    
    We also show 
	\begin{equation}\label{eq:pN}
		p_N = o(1/n^{4}) = o(e^{-x_0}).
	\end{equation}
	For $X_N$ to equal $1$, we must have that $m(T_N) \ge y_N$. The time $T_N = t_n$ is larger than the other $T_k$, but it is still bounded by the time it takes all $n$ particles to go to the sink, which concentrates around $n / q'_n \sim r_n$, and so $T_N \le 2r_n$ with at least $1 - o(1/n^{4})$ probability. It suffices then to show $\P(m(2r_n) \ge y_N) = o(1/n^4)$. By \thref{lem:max-tail}, it is enough that $2r_n \mu_{y_N} = o(1/n^4)$, which is the case by \eqref{eq:mu-yN}. Thus \eqref{eq:pk-small} is shown, and only \eqref{eq:lambd} remains.

	We now upper bound $\nu$. By \eqref{eq:pk-ub}, \eqref{eq:mu-y0-e-x0}, \eqref{eq:pN}, along with the fact that $N = o(n)$ and $e^{-x_0} = \Omega(1/n^3)$, we have
	\begin{equation}\label{eq:lambd-sum}
		\nu = \sum_{k=0}^N	p_k \lesssim T\sum_{k=1}^{N-1}\mu_{y_k} + o(1/n^4) + o(N/n^4) = T\sum_{k=1}^{N-1}\mu_{y_k} + o(e^{-x_0}).
	\end{equation}
	The summation term is a Riemann sum for $\mu_y$, which is decreasing on the range of interest. Therefore,
	\begin{equation}\label{eq:riemann}
		T\sum_{k=1}^{N-1}\mu_{y_k} \le Tf_n \log^3 \! n \int_{y_0}^\infty \mu_y \, dy = Tf_n \log^3 \! n \,\frac{\mu_{y_0}}{y_0} \sim e^{-x_0},
	\end{equation}
	where at the end we used \eqref{eq:T-mu-y0}. Equations \eqref{eq:lambd-sum} and \eqref{eq:riemann} establish the upper bound of \eqref{eq:lambd}. \qed
	
\subsection{Lower bound}\label{sec:LB}
	
	To complete the proof of \eqref{eq:in-distr}, we show the lower bound on $\nu' = \sum_{k = 0}^Np'_k \ge \sum_{k = 0}^{N - 1}p'_k$ from \eqref{eq:lambd}. For fixed $0 \le k < N$, we have $X'_k = 1$ as long as the following hold:
	\begin{enumerate}
		\item $m'_k := m(t_{k+1} + T_\delta, t_{k+1} + T^-) \ge y_{k+1}$,
		\item $\tau(t_{k+1}) \le t_{k+1} + T_\delta$,
		\item $t_k > t_{k+1} + T^-$.
	\end{enumerate}
	Denote these events by $E_1, E_2,$ and $E_3$, respectively. Then
    
	\begin{equation}\label{eq:p'k}
		p'_k \ge \P(E_1) - \P(E_2^c) - \P(E_3^c).
	\end{equation}
	By \thref{lem:max-tail}, 
    
	\begin{equation}\label{eq:E1}
		\P(E_1) \sim (T^- - T_\delta)\mu_{y_{k+1}} \sim T \mu_{y_{k+1}}.
	\end{equation}
	To bound $E_2$, we perform a union bound over all $n$ coordinates, each of which fails to update with probability $e^{-T_\delta}$. By \eqref{eq:coupon} then,
    
	\begin{equation}\label{eq:E2}
		\P(E_2^c) \le ne^{-T_\delta} \lesssim 1/n^4.
	\end{equation}
	Also, 
	\begin{equation}\label{eq:E3}
		\P(E_3^c) \lesssim 1/n^2
	\end{equation}
	by \eqref{eq:Ek}. 
    Thus, by \eqref{eq:p'k}--\eqref{eq:E3},

	\begin{align}\label{eq:labmdprime}
		\nu' \ge \sum_{k=0}^{N-1}p'_k \gtrsim T\sum_{k=0}^{N-1}\mu_{y_{k+1}} - O(N/n^4) = T\sum_{k=0}^{N}\mu_{y_{k}} - o(e^{-x_0}),
	\end{align}
    where for the last equality we used that $T\mu_{y_0} = o(e^{-x_0})$ by \eqref{eq:mu-y0-e-x0}. Lower bounding the Riemann sum,
	\begin{align}\label{eq:riemann2}
		T\sum_{k=0}^{N}\mu_{y_{k}} \ge Tf_n \log^3 \! n\int_{y_0}^{y_N} \mu_y \, dy 
			&= Tf_n \log^3 \! n \bigg(\frac{\mu_{y_0}}{y_0} - \frac{\mu_{y_N}}{y_N}\bigg) \nonumber \\
            &\sim e^{-x_0}\big(1 - \frac{y_0\mu_{y_N}}{y_N\mu_{y_0}}\big).
	\end{align}
    To remove the final term, note that, as both $y_N - y_0$ and $y_N + y_0$ are large, bounded below by $y_0 = \Theta(f_n)$, we have
    \begin{align}\label{eq:remove-yN}
        \frac{y_0\mu_{y_N}}{y_N\mu_{y_0}} = \exp(-(y_N^2 - y_0^2)/2) = \exp(-(y_N - y_0)(y_N + y_0)/2) \to 0.
    \end{align}
    By \eqref{eq:labmdprime}--\eqref{eq:remove-yN}then, we have $\nu' \gtrsim e^{-x_0}$, the lower bound of \eqref{eq:lambd}. 
    
    This completes the proof of \eqref{eq:in-distr}.

\subsection{Variance} \label{sec:variance} We now prove \eqref{eq:var}, that $\var(G_n) \to \var(G) = \pi^2/6$. Having shown \eqref{eq:in-distr}, stating that $G_n \xrightarrow[]{d} G$, it suffices to demonstrate convergence of the second moments $\E[G_n^2] \to \E[G^2]$. We use the identities
\begin{align*}
    \frac{1}{2} \, \E[G^2] = \int_0^\infty x \, \P(|G| > x) \, dx &= \int_0^\infty x \, \P(G > x) \, dx + \int_{-\infty}^0|x|\, \P(G \le x)\, dx \\
    &=: R + L
\end{align*}
and, similarly,
\begin{align*}
    \frac{1}{2} \, \E[G_n^2] &= \int_0^{3 \log n} x \, \P(G_n > x) \, dx + \int_{3 \log n}^\infty x \, \P(G_n > x) \, dx \\
    & \qquad + \int_{- 2 \log \log n^3}^0 |x| \, \P(G_n \le x) \, dx +\int_{-\infty}^{- 2 \log \log n^3} |x| \, \P(G_n \le x) \, dx \\
    &=: (R_{n,1} + R_{n,2}) + (L_{n,1} + L_{n,2}).
\end{align*}
We will show via the dominated convergence theorem that $R_{n,1}, L_{n,1} \to R,L$, and we will also show that $R_{n,2},L_{n,2}$ vanish.

For the right tails, take $0 \le x_0 \le 3 \log n$. We have $\P(G_n > x_0) = \P(X > 0)$ which by Markov's inequality is at most $\E[X] = \nu$. By \eqref{eq:lambd} then, 
\begin{equation}\label{eq:right-tail}
    x_0 \, \P(G_n > x_0) < 2x_0e^{-x_0},
\end{equation}
assuming $n$ is sufficiently large. Since $2xe^{-x}\mathbf{1}_{\{x \ge 0\}}$ is integrable, and $x\P(G_n > x) \to x\P(G > x)$ pointwise by \eqref{eq:in-distr}, indeed we have $R_{n,1} \to R$ by the dominated convergence theorem. 

For $R_{n,2}$, we have that $G_n$ is at most order $\sqrt{n} \, f_n = o(n)$, as $S_n$ lies between $0$ and $n$. For large $n$ then, using \eqref{eq:right-tail} again, we have
$$
    R_{n,2} \le o(n^2) \, \P(G_n > 3 \log n) \le o(n^2)2n^{-3} \to 0.
$$

For the left tails, take $- 2 \log \log n^3 \le x_0 \le 0$. We have $\P(G_n \le x_0) \le \P(X' = 0)$, which, as the $X'_k$ are independent, is at most $e^{-\nu'}$. Thus, for large enough $n$ we have 
\begin{equation}\label{eq:right-tail2}
    |x_0|\P(G_n \le x_0) \le |x_0|\exp(-e^{-x_0/2}).
\end{equation}
Similarly, as $xe^{-x/2}\mathbf{1}_{\{x \le 0\}}$ is integrable and $|x|\P(G_n \le x) \to |x|\P(G \le x)$ pointwise, we have $L_{n,1} \to L$. And
\begin{align*}
    L_{n,2} \le o(n^2) \, \P(G_n \le - 2 \log \log n^3) \le o(n^2) \, \frac{1}{n^3}
\end{align*}
which vanishes, completing the proof.

\qed

\section{proof of \thref{lem:max-tail}} \label{sec:m}

Recall that \thref{lem:max-tail} states that  for $x=x_n$ and $T=T_n$ satisfying  $x = \Omega(\sqrt{\log n})$, $x = o(n^{1/6})$, and $\log n \ll T \ll 1/\mu_x$ we have $\P(m(T) \ge x) \sim \mu_x \,T$
    with 
$$
    \mu_x = \frac{x\exp(-x^2/2)}{\sqrt{2 \pi}} \sim x^2\Tail(x).
$$

Denote the \emph{exceedance time} above $x \in \R$ on $[t_1, t_2]$ (the values $t_1, t_2 \in [0,\infty)$ are simply scalars and are unrelated to the $t_k$ in the proof of \thref{thm:gumby}) by
$$\ExcTime(t_1, t_2) = \ExcTime(t_1, t_2, x) := \int_{t_1}^{t_2} \mathbf{1}_{\{s(t) \ge x\}} \, dt,$$
and use the convention $\ExcTime(t) := \ExcTime(0, t)$. We note that for any stopping time $\tau \ge 0$ and $t \ge 0$, letting $\E_y$ denote the expectation conditioned on $s(0) = y$,
\begin{equation}\label{eq:exc-from-stopping-time}
    \E[\ExcTime(\tau, \tau + t) \mid (s(t))_{0 \le t \le \tau}] = \E_{s(\tau)} \, \ExcTime(t) \quad a.s.
\end{equation}
Also, for any $z \ge y \in \R$, 
\begin{equation}\label{eq:exc-monotonic}
    \E_z \, \ExcTime(t) \ge \E_y \, \ExcTime(t).
\end{equation}
\eqref{eq:exc-monotonic} may be proved by a coupling where, once $s(0)$ is set, the time and value of all coordinate updates is shared, so that $s(t)$ is monotonic in $s(0)$ (and after time $\tau(0)$, $s(0)$ is forgotten).

Denote the maximum possible value that $s(t)$ may take by $x_M := n(1-p)/a_n$. For $0 \le x \le x_M$, let $x' \ge x$ be minimal so that $s(t)$ is supported on $x'$, that is $xa_n + pn$ is an integer. Recall that $\tau(t)$ is the time by which each coordinate has updated its value from time $t$. For convenience, let $\tau^+(t) := \max(\tau(t), t + 2 \log n)$, and let $\tau^+ := \tau^+(0)$. It is easy to show that $\E \tau^+ \sim 2 \log n$. This is because at any time $t \ge 0$, each coordinate fails to update by time $t$ with probability $e^{-t}$, so by a union bound over the $n$ coordinates,
\begin{equation}\label{eq:coupon-ub}
    \E \tau^+ - 2 \log n = \int_{2 \log n}^\infty \P(\tau(0) \ge t) \, dt \le  \int_{2 \log n}^\infty ne^{-t} \, dt = 1/n.
\end{equation}

We will show that $\E_{x'} \, \ExcTime(\log n) \sim \E_{x'} \, \ExcTime(\tau^+) \sim x^{-2}$:

\begin{lemma}\thlabel{lem:cluster}
    For $x$ satisfying $x = \Omega(\sqrt{\log n})$ and $x = o(n^{1/6})$, 
    $$
        \E_{x'} \, \ExcTime(\log n) \sim x^{-2} \sim \E_{x'} \, \ExcTime(\tau^+).
    $$
\end{lemma}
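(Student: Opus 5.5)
The plan is to compute $\E_{x'}\ExcTime(t)$ directly from the exact conditional law of the continuous-time binomial update process. Given $s(0)=x'$, i.e.\ $S(0)=k:=pn+xa_n$ (with $k$ an integer by the choice of $x'$), at time $r$ each coordinate has not yet updated with probability $e^{-r}$, independently. Hence
$$S(r)\overset{d}{=}\Bin(k,\,p+(1-p)e^{-r})+\Bin(n-k,\,p(1-e^{-r})),$$
with the two summands independent. The mean is $pn+xa_ne^{-r}$ and the variance is $n\sigma^2(1-e^{-2r})+O(x\sqrt n\,e^{-r})$, which matches the OU transition law. So $\E_{x'}\ExcTime(t)=\int_0^t \P_{x'}(s(r)\ge x)\,dr$, and for $r$ small this probability is $\P(\text{Bin-sum}\ge pn+xa_n)$. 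Here we need $xa_n(1-e^{-r})$ standard deviations of order $\sigma\sqrt{n}\sqrt{2r}$. After substituting $r=u/x^2$, the mean deficit is $\approx x a_n u/x^2 = a_n u/x$ and the standard deviation is $\approx a_n\sqrt{2u}/x$. The probability therefore tends to $\bar\Phi(u/\sqrt{2u})=\bar\Phi(\sqrt{u/2})$. This gives $x^2\int_0^{\infty}\P_{x'}(s(u/x^2)\ge x)\,du/x^2$, that is,
$$\E_{x'}\ExcTime\sim x^{-2}\int_0^\infty \bar\Phi(\sqrt{u/2})\,du.$$
Computing $\int_0^\infty\bar\Phi(\sqrt{u/2})\,du=\E[2\mathcal N^2;\mathcal N>0]=1$ gives exactly $x^{-2}$, which is consistent with the claimed constant.

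The key steps are as follows. First, for $r\le C/x^2$ with $C\to\infty$ slowly, establish a local CLT or Berry–Esseen approximation for the above binomial sum. Since the relevant deviation is only $O(\sqrt{u})$ standard deviations in the rescaled variance, and the variance $\asymp n u/x^2$ is large because $x=o(n^{1/6})$, the normal approximation holds with relative error $o(1)$ uniformly on $u\in[\eta,C]$. The small range $u<\eta$ contributes at most $\eta/x^2$ trivially. Integrating and letting $\eta\to0$, $C\to\infty$ gives a lower bound $\E_{x'}\ExcTime(\log n)\gtrsim x^{-2}$.

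Second, and harder, I need an upper bound for the tail range $r\in[C/x^2,\tau^+]$. There I must show $\int_{C/x^2}^{\log n}\P_{x'}(s(r)\ge x)\,dr=o(x^{-2})$ as $C\to\infty$, and similarly beyond $\log n$ up to $\tau^+$. For $r\in[C/x^2,1]$ the mean deficit is $\gtrsim a_n r x$ and the variance is $\lesssim a_n^2 r$, so a Chernoff/Bernstein bound gives $\P\le\exp(-c\,x^2 r)$ as long as the deviation stays in the moderate range. This requires $x\sqrt r\cdot$(number of sds) to be $o(n^{1/6})$-compatible, which is where $x=o(n^{1/6})$ enters: the cumulant corrections in the Binomial moderate-deviation exponent are $O(x^3/\sqrt n)=o(1)$. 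Integrating gives $O(e^{-cC}/x^2)$. For $r\ge1$, the mean is at most $pn+xa_ne^{-1}$ and the variance is bounded below, so $\P\le \exp(-c x^2)$. Multiplied by the time length $\tau^+$, whose expectation is $\sim2\log n$ by \eqref{eq:coupon-ub}, this is $O(\log n\,e^{-cx^2})$, which is $o(x^{-2})$ when $x\ge K\sqrt{\log n}$. I expect the main obstacle to be the case $x=\Theta(\sqrt{\log n})$ with a small constant. There the crude bound $e^{-cx^2}\log n$ is not small, and I will need the sharper exponent: for $r\ge1$ the target sits at least $x(1-e^{-r})/\sqrt{1-e^{-2r}}$ standard deviations away, so $\P\le\exp(-x^2(1-e^{-r})/(2(1+e^{-r}))(1+o(1)))$. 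Integrating this over $r\in[1,2\log n]$ still gives roughly $\log n\cdot n^{-c'}$, which is $o(1/\log n)$ once $x^2\ge c\log n$ gives a polynomial decay. This uses the assumption $x=\Omega(\sqrt{\log n})$ with the implicit constant; if that is insufficient, I would split at $r=\sqrt{\log n}$ and note that for larger $r$ the process has essentially mixed, so the probability is $\sim\bar\Phi(x)=O(e^{-x^2/2}/x)$, which is polynomially small.

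Finally, for $\ExcTime(\tau^+)$ versus $\ExcTime(\log n)$, the difference is bounded by $\ExcTime(\log n,\tau^+)$. Since $\tau^+$ is not independent of the path, I will bound it by $\ExcTime(\log n,4\log n)+\int_{4\log n}^\infty \mathbf 1_{\{\tau^+>r\}}dr$. The first term is controlled by the tail estimate above, and the second has expectation $\le n e^{-4\log n}\cdot O(1)=o(x^{-2})$ by the union bound in \eqref{eq:coupon-ub}. Monotonicity \eqref{eq:exc-monotonic} is not needed here; we use the exact started-at-$x'$ law throughout.
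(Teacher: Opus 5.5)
Your argument is correct, and its skeleton is the paper's. You share the exact conditional law of $S(r)$ as a sum of independent Bernoullis, Berry--Esseen on the window $r\asymp x^{-2}$, the substitution that turns the bulk into $4\int_0^\infty u\,\Tail(u)\,du=1$, and the union bound $\P(\tau(0)>r)\le ne^{-r}$ to pass from $\log n$ to $\tau^+$. Where you genuinely differ is in discarding the non-bulk times.

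The paper takes a single $n$-dependent cutoff $t=x^{-3/2}$, i.e.\ rescaled time $x^{1/2}\to\infty$. On $[0,x^{-3/2}]$ it shows $Q(t)\sim\Tail(x'\sqrt{t/2})$ up to an additive $O(1/\sqrt{nt})$. On $[x^{-3/2},2\log n]$ it only uses that both terms of the Berry--Esseen approximation decrease in $t$, giving the bound $2\log n\,[\Tail(\Theta(x^{1/4}))+O(x^{3/4}/\sqrt n)]=o(x^{-2})$. That needs one tool, but it must beat the additive Berry--Esseen error integrated over a window of length $2\log n$, which is one place where $x=o(n^{1/6})$ is spent.

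You instead apply Berry--Esseen only for $u\in[\eta,C]$, where the limiting probability is bounded below, so additive error becomes relative error. You control $r\ge C/x^2$ by Bernstein bounds $e^{-cx^2r}$ and $e^{-cx^2}$, and send $\eta\to0$, $C\to\infty$ at the end. This costs a second inequality and a double limit, but it removes all additive-error bookkeeping from the tail.

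A few small corrections. Your worry about $x=\Theta(\sqrt{\log n})$ with a small implicit constant is unfounded. For fixed $c,K>0$ and $x^2\ge K\log n$, the function $ye^{-cy}$ is eventually decreasing, so $x^2\log n\,e^{-cx^2}\le K(\log n)^2n^{-cK}\to0$. Hence the crude bound already gives $o(x^{-2})$, and the mixing fallback is unnecessary.

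For $r\ge1$, what Bernstein needs is the variance bounded \emph{above} by $a_n^2(1+o(1))$, not below. Likewise, no moderate-deviation cumulant estimates are needed on $[C/x^2,1]$; Bernstein only uses $x=o(a_n)$. With your coarse variance error $O(x\sqrt n\,e^{-r})$, the hypothesis $x=o(n^{1/6})$ is actually used to make that correction negligible in the bulk window. Finally, $S(0)=pn+x'a_n$, not $pn+xa_n$, though this changes nothing.
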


First, we use \thref{lem:cluster} to carry out the proof of \thref{lem:max-tail}.
\begin{proof}[Proof of \thref{lem:max-tail}]

    Note by stationarity that 
    \begin{equation}\label{eq:stationarity}
        \E \, \ExcTime(T) = T \, \P(s(0) \ge x) \sim T\Tail(x) \sim x^{-2}T\mu_x.
    \end{equation}
    The similitude with the Gaussian tail comes from \cite[Theorem 2]{cramer2018new} and the requirement that $x = o(n^{1/6})$. 

    For the upper bound on $p_{x,T} := \P(m(T) \ge x)$, let $\tau_x$ be the first exceedance time $t \ge 0$ where $s(t) \ge x$, so that $m(T) \ge x$ if and only if $\tau_x \le T$. We claim that 
    \begin{equation*}\label{eq:uniform}
        (\tau_x \mid \tau_x \le T) \preceq_{} \operatorname{Unif}(0,T).    
    \end{equation*}
    Indeed, on $m(T) \ge x$, let $t^* \in [0,T]$ be a uniformly chosen exceedance time where $s(t^*) \ge x$. Then $t^* \sim \operatorname{Unif}(0,T)$ and $t^* \ge \tau_x$. 
    
    Therefore, 
    \begin{align}\label{eq:lb-with-buffer}
        \E[\ExcTime(T) \mid m(T) \ge x] 
            &= \E[\ExcTime(T) \mid \tau_x \le T] \nonumber \\&\ge \frac{T - \log n}{T} \, \E[\ExcTime(T) \mid \tau_x \le T - \log n] \nonumber \\
            &\sim \E[\ExcTime(T) \mid \tau_x \le T - \log n].
    \end{align}
    
    Note that on $\tau_x \le T - \log n$, we have $\ExcTime(T) \ge \ExcTime(\tau_x, \tau_x + \log n)$. And since $s(\tau_x) \ge x'$ (we have equality except when $\tau_x = 0$), by \eqref{eq:exc-from-stopping-time} and \eqref{eq:exc-monotonic} we have
    \begin{align}\label{eq:buffer}
        \E[\ExcTime(T) \mid \tau_x \le T - \log n] 
            & \overset{a.s.}{\ge} \E[\ExcTime(\tau_x, \tau_x + \log n) \mid \tau_x \le T - \log n] \nonumber \\
            &\overset{a.s.}{\ge} \E_{x'}\,  \ExcTime(\log n) \nonumber \\
            &\sim x^{-2},
    \end{align}
    invoking \thref{lem:cluster} for the last step.
    Combining \eqref{eq:lb-with-buffer} and \eqref{eq:buffer} then, we have
    \begin{align*}
        \E \, \ExcTime(T) = p_{T,x}\, \E[\ExcTime(T) \mid m(T) \ge x] \gtrsim p_{T,x} \, x^{-2}.
    \end{align*}
    Therefore, by \eqref{eq:stationarity}, 
    \begin{equation}\label{eq:main-lemma-ub}
        p_{T,x} \lesssim x^2 \, \E \, \ExcTime(T) \sim T\mu_x.
    \end{equation}
    This establishes half of the lemma, and also that $p_{T,x} = o(1)$.

    For the lower bound, to analyze $\ExcTime(T)$, we go up to the first exceedance time $\tau_x$, allow the process to regenerate, and note that now there is less than $T$ time left. In symbols, 
    $$\ExcTime(T) \le \ExcTime\big(\tau_x, \tau^+(\tau_x)\big) + \ExcTime\big(\tau^+(\tau_x), \tau^+(\tau_x) + T\big) =: \ExcTime_1 + \ExcTime_2.$$ 
    Note that $\ExcTime_2 \mid \tau^+(\tau_x) \overset{d}{=} \ExcTime(T)$. Therefore, since $\ExcTime(T) = 0$ whenever $\tau_x > T$,
    \begin{align*}
        \E \, \ExcTime(T)
            &= \E \mathbf{1}_{\{\tau_x \le  T\}}\ExcTime(T) \\
            &\le \E[\mathbf{1}_{\{\tau_x = 0\}}\ExcTime_1] + \E[\mathbf{1}_{\{0 < \tau_x \le T\}}\ExcTime_1] + p_{T,x} \, \E[\ExcTime_2 \mid \tau_x \le T]\\
            &= \E[\mathbf{1}_{\{\tau_x = 0\}}\ExcTime(\tau^+)] + \P(0 < \tau_x \le T)\E_{x'}\, \ExcTime(\tau(0)) + p_{T,x}\, \E \, \ExcTime(T)\\
            &\le \E \, \ExcTime(\tau^+) + p_{T,x} \, \E_{x'}\, \ExcTime(\tau^+) + o(\E \, \ExcTime(T)).
    \end{align*}
    Now by stationarity, the first term $\E \, \ExcTime(\tau^+)$ (which does not start from $x'$ as in \thref{lem:cluster}) is exactly $\E[\tau^+] \, \P(s(0) \ge x) \sim 2\log n \, \Tail(x) \ll T \Tail(x) \sim \E\, \ExcTime(T)$, using \eqref{eq:stationarity}. Rearranging then, we have 
    \[
        p_{T,x} \gtrsim \frac{\E \, \ExcTime(T)}{\E_{x'} \, L(\tau^+)}.
    \]
    Again then applying \thref{lem:cluster} and \eqref{eq:stationarity}, we have  $p_{T,x} \gtrsim T \mu_x$, 
    establishing the second half of the lemma.
\end{proof}

Finally, we prove \thref{lem:cluster}. 
\begin{proof}[Proof of \thref{lem:cluster}]
    The bulk of both exceedance quantities comes from times on the order of $x^{-2}$. Let $Q(t) := \P_{x'}(s(t) \ge x')$ where $\P_{x'}$ is the probability measure conditioned on $s(0) = x'$. We use the Berry-Esseen theorem to approximate $Q(t)$.

    Let $\Vec{X}(t) = (X_i(t))_{1 \le i \le n}$ denote the value of each coordinate at time $t$, so that 
$$
    S(t) = \sum_{i = 1}^n X_i(t). 
$$
Each coordinate updates by time $t$ independently with probability $1 - e^{-t}$, with each updated value independent. Therefore, conditioned on $\Vec{X}(0)$, $\Vec{X}(t)$ is composed of independent (non-identical) Bernoulli random variables, where $\E[X_i(t) \mid \vec{X}(0)] = q_{X_i(0)}$ for 
\begin{equation}\label{eq:bern-coord}
    q_X := (1 - e^{-t})p + e^{-t}X = p + e^{-t}(X - p).
\end{equation}
Thus $S(t)$ is the independent sum of $S(0)$ copies of $\Ber(q_1)$ and $n - S(0)$ copies of $\Ber(q_0)$ random variables, and as expected only depends on $\Vec{X}(0)$ through $S(0)$. Conditioning on $s(0) = x'$ then, i.e., 
$
    S(0) = pn + x'a_n,
$
we get
\begin{equation}\label{eq:clt-mean}
    \mu(t) := \E_{x'}S(t)
        = S(0)q_1 + (n - S(0))q_0 = 
        pn + e^{-t}x'a_n.
\end{equation}

For the variance, letting $\sigma^2_X := q_X(1 - q_X)$ and recalling that $a_n = \sigma \sqrt{n} = \sqrt{p(1-p)n}$,
\begin{align}
\sigma^2(t) := \var_{x'} \! S(t)
&= S(0)\sigma^2_1 + (n-S(0))\sigma^2_0 \nonumber \\
&= (pn+x'a_n)q_1(1 - p)(1-e^{-t}) \nonumber \\
&\qquad + (n(1-p)-x'a_n)p(1-e^{-t})(1 - q_0) \nonumber \\
&= \big[1 - e^{-t}\big]\big[pn(1-p)(e^{-t} + 1) + x'a_ne^{-t}(1-2p)\big] \nonumber \\
&= \big[1 - e^{-t}\big]\big[a_n^2(e^{-t} + 1) + O(x'a_n)\big] \nonumber \\
&= a_n^2(1 - e^{-t})(1 + e^{-t})(1 + O(x'/a_n)).
\end{align}

By the Berry-Esseen theorem, and using that Bernoulli random variables are bounded, we have, uniformly over all $t > 0$,
\begin{align}\label{eq:Qt}
    Q(t) = \P_{x'}(S(t) \ge pn + x'a_n) 
        &= \Tail\bigg(\frac{pn + x'a_n - \mu(t)}{\sigma(t)}\bigg) + O\bigg(\frac{1}{\sigma(t)}\bigg) \nonumber \\
        &= \Tail\bigg(x'\sqrt{\frac{1 - e^{-t}}{1 + e^{-t}}(1 + O(x'/a_n)}\bigg)  + O\bigg(\frac{1}{\sqrt{n(1 - e^{-t})}}\bigg).
\end{align}

Taylor expanding $e^{-t}$ for small $t$ gives $e^{-t} = 1 - t + O(t^2)$. Therefore, if we restrict to  $t \le x^{-3/2}$, we have 
\begin{align}
    \Tail\bigg(x'\sqrt{\frac{1 - e^{-t}}{1 + e^{-t}}\big(1 + O(x'/a_n)\big)}\bigg) 
        &= \Tail\bigg(x'\sqrt{\frac{t}{2}\big(1 + O(t) + O(x'/a_n)\big)}\bigg) \nonumber \\
        &\sim \frac{\exp\big(-\frac{1}{2}(x')^2[t/2 + O(t^2) + O(x'/a_n)]\big)}{\sqrt{2\pi} \, x' \sqrt{t/2}} \nonumber \\
        &\sim \frac{\exp\big(-\frac{1}{2} (x')^2 \, t/2\big)}{\sqrt{2\pi} \, x' \, \sqrt{t/2}} \label{eq:Qt2-midway} \\
        &\sim \Tail\big(x'\sqrt{t/2}\big). \label{eq:Qt2}
\end{align}

Equation \eqref{eq:Qt2-midway} follows from the fact that both $(x')^2t^2$ and $(x')^3/a_n$ are small under the assumptions that $t \le x^{-3/2}$ and $1 \ll x \ll n^{1/6}$, and since $x' \sim x$. Also, in this regime, 
\begin{equation}\label{eq:error-small-regime}
    \frac{1}{\sqrt{n(1 - e^{-t})}} = O\bigg(\frac{1}{\sqrt{nt}}\bigg).
\end{equation}

Note that both terms of \eqref{eq:Qt} are decreasing in $t$, and that $\log n = O(x^2)$ by the assumption on $x$. Therefore, combining \eqref{eq:Qt}, \eqref{eq:Qt2}, and \eqref{eq:error-small-regime}, 
\begin{align}\label{eq:bulk}
    \E_{x'} \, \ExcTime(x^{-3/2}, 2 \log n)  
        &= \int_{x^{-3/2}}^{2 \log n}Q(t)\, dt \nonumber \\\
        &\le 2 \log n\bigg[\Tail\bigg(x'\sqrt{x^{-3/2}/2}\bigg) + O\bigg(\frac{1}{\sqrt{nx^{-3/2}}}\bigg)\bigg] \nonumber \\
        &= O(x^2)\Tail(\Theta(x^{1/4})) + O\bigg(\frac{x^{3/4} \log n}{\sqrt{n}}\bigg) \nonumber \\
        &= O(x^2)\exp(-\Omega(x^{1/2})) + O\bigg(\frac{\log n}{n^{3/8}}\bigg) \nonumber \\
        &= o(x^{-2}) + o\bigg(\frac{1}{n^{1/3}}\bigg) \nonumber \\
        &= o(x^{-2}).
\end{align}

We claim it suffices to check exceedances up to time $x^{-3/2}$, that is to show 
\begin{equation}\label{eq:et-bulk}
    \E_{x'} \, \ExcTime(x^{-3/2}) \sim x^{-2}.
\end{equation}
Indeed then, combined with \eqref{eq:bulk} and \eqref{eq:coupon-ub} we have that
\begin{align*}
    x^{-2} \sim \E_{x'} \, \ExcTime(x^{-3/2}) \le E_{x'} \, \ExcTime(\log n) 
        &\le \E_{x'} \, \ExcTime(\tau^+) \\ 
        &\le \E_{x'} \, \ExcTime(2 \log n) + \E[\tau^+ - 2 \log n]\\
        &\le \E_{x'} \, \ExcTime(x^{-3/2}) + \E_{x'} \, \ExcTime(x^{-3/2}, 2 \log n) + 1/n\\
        &\sim x^{-2},
\end{align*}
where at the end we used that $n \gg x^6 \gg x^2$.

Thus it remains to show \eqref{eq:et-bulk}. We use \eqref{eq:Qt}, \eqref{eq:Qt2}, \eqref{eq:error-small-regime}, and the fact that $\int_{0}^\infty u \Tail(u) \, du = 1/4$:
\begin{align*}
    \E_{x'} \, \ExcTime(x^{-3/2}) 
        =  \int_{0}^{x^{-3/2}}Q(t)\, dt  &\sim \int_{0}^{x^{-3/2}}\Tail(x' \sqrt{t / 2})\, dt + O\bigg(\int_0^{x^{-3/2}} \frac{1}{\sqrt{nt}} \, dt \bigg)\nonumber \\
        &= \frac{4}{(x')^2}\int_0^{x'\sqrt{x^{-3/2}/2}} u\Tail(u) \, du + o\big( 1/\sqrt{n} \big) \nonumber \\
        &\sim x^{-2} + o(x^{-2}).
\end{align*}
For the first term in the last line, we used that $x'\sqrt{x^{-3/2}/2} \sim x^{1/4}/\sqrt{2} \gg 1$. For the second term, we used that $\sqrt{n} \gg x^3$. This demonstrates \eqref{eq:et-bulk} and completes the proof of the lemma.

\end{proof}

\section{Notation} \label{sec:notation}
Here we define our asymptotic and random variable notation.
\subsection{Asymptotic notation}
Take $f,g,h \colon \N \to (0, \infty)$. 
\begin{itemize}
    \item $f \ll g$ and $f = o(g)$ mean that $\lim f / g = 0$.
    \item $f = O(g)$ if $\limsup f/ g < \infty$. 
    \item $f = \Theta(g)$ if $f = O(g)$ and $g=O(f)$. 
    \item $f=\Omega(g)$ if $g = O(f)$. 
    \item $f \sim g$ and $f = g(1+o(1))$ both mean that $\lim f/g = 1$
    \item $f \lesssim g$ means that $\limsup f/g \leq 1 $. 
    \item $f \lesssim g + o(h)$ means that $\limsup f /(g + \epsilon h) \leq 1$ for all $\epsilon >0$.
    \item $f \lesssim g + O(h)$ means that $\limsup f / (g + C h) \leq 1$ for some $C>0$.   
    \item $f \sim g + o(h)$ means that $f \lesssim g + o(h)$ and $g \lesssim f + o(h)$. Similarly for $f \sim g + O(h)$. 
    \item $f \approx g$ means that $f$ is close to $g$ in an intuitive, but not rigorously stated sense. 
    \item A sequence of events occurs {with high probability} (w.h.p.) if their probabilities tend to one. 
\end{itemize}
\subsection{Random variables}
\begin{itemize}
    \item $\Bin(n,p)$ is the binomial distribution with parameters $n$ and $p$.
    \item  $\Geo(s)$ is the geometric distribution with mean $1/s$ supported on the positive integers. 
    \item $\Pois(\nu)$ is the Poisson distribution with mean $\nu$. 
    \item Bernoulli($s$) is the Bernoulli distribution with mean $s$. 
    \item $\Exp(\nu)$ is the exponential distribution with mean $1/\nu$. %
    \item $X \sim \Bin(n,p)$ means that the random variable $X$ is $\Bin(n,p)$-distributed and similarly for the other named distributions.
    \item $X \overset{d} = Y$ means that $X$ and $Y$ have the same distribution. 
    \item $X \preceq_{}Y$ if $\P(X \geq a) \leq \P(Y \geq a)$ for all $a \in \mathbb R$. 
    \item $X \overset{a.s.} \leq Y$ denotes an inequality that holds almost surely. 
    \item $X_n \overset{d} \to X$ if $\P(X_n \leq x) \to \P(X \leq x)$ for all continuity points of $\P(X \leq x)$. 
    \item $X_n \overset{\P}\to x$ if $|X_n - x| \leq \epsilon$ w.h.p.\ for all $\epsilon >0$. 
\end{itemize}

\section*{Acknowledgements}
Junge was partially supported by NSF DMS Grant 2238272. Kaufman was partially supported by NSF DMS Grants 2238272 and 2349366. Part of this research was conducted during the 2025 Baruch College Discrete Mathematics NSF Site REU.

\bibliographystyle{amsalpha}
\bibliography{BA.bib}

\end{document}